\numberwithin{equation}{section}
\newtheorem{theorem}{Theorem}[section]
\newtheorem{lemma}[theorem]{Lemma}
\newtheorem{proposition}[theorem]{Proposition}
\newtheorem{corollary}[theorem]{Corollary}
\theoremstyle{definition}
\newtheorem{df}{Definition}
\newtheorem{example}[df]{Example}
\newtheorem{remark}[df]{Remark}
\newcommand{\N}{\mathbb N}
\subjclass[2010]{28A80, 05B10} 
\keywords{Cantor sets, Cantorvals, algebraic difference of sets, achievement sets}
\begin{document}
\author{Piotr Nowakowski}
\address{Institute of Mathematics, \L \'{o}d\'{z} University of Technology,
ul. W\'{o}lcza\'{n}ska 215, 93-005 \L \'{o}d\'{z}, Poland}
\email{piotr.nowakowski@dokt.p.lodz.pl}
\author{Tomasz Filipczak}
\address{Institute of Mathematics, Lodz University of Technology, ul. W\'{o}%
lcza\'{n}ska 215, 93-005 \L \'{o}d\'{z}, Poland}
\email{tomasz.filipczak@p.lodz.pl}
\title[Conditions for the difference set of a central Cantor set]{Conditions
for the difference set of a central Cantor set to be a Cantorval}
\date{}

\begin{abstract}
Let $C(\lambda )\subset \lbrack 0,1]$ denote the central Cantor set
generated by a sequence $\lambda =(\lambda _{n})\in \left( 0,\frac{1}{2}%
\right) ^{\mathbb{N}}$. By the known trichotomy, the difference set $%
C(\lambda )-C(\lambda )$ of $C(\lambda )$ is one of three possible sets: a
finite union of closed intervals, a Cantor set, and a Cantorval. Our main
result describes effective conditions for $(\lambda _{n})$ which guarantee
that $C(\lambda )-C(\lambda )$ is a Cantorval. We show that these conditions
can be expressed in several equivalent forms. Under additional assumptions,
the measure of the Cantorval $C(\lambda )-C(\lambda )$ is established. We
give an application of the proved theorems for the achievement sets of some
fast convergent series.
\end{abstract}

\maketitle

\section{Introduction}

By a \emph{Cantor set} we mean a compact, perfect and nowhere dense subset
of the real line.\ Cantor sets appear in several publications in many
different settings, cf. \cite{PP} and \cite{Ta}. They occur in mathematical
models involving fractals, iterated functional systems and fractional
measures \cite{Fl}. They play a role in number theory (e.g. in $b$-ary
number representations, and in connection with continued fractions \cite{H}%
). They are rooted in dynamical systems in the study of homoclinic
bifurcations \cite{PT}, in signal processes and ergodic theory, and also in
limit theorems from probability, as it is stated in \cite{PP}. Finally, let
us mention the applications in spectral theory \cite{DGS}, \cite{T16}.
Important results describe geometrical properties of Cantor sets via
Hausdorff and packing measures and the respective fractal dimensions, and
multifractal spectrum \cite{AS}, \cite{CHM}, \cite{GZ}, \cite{HZ}. Also,
several authors conducted extensive studies on the arithmetic sums \cite{AC}%
, \cite{MO}, \cite{T19} and products \cite{Ta} of two Cantor sets, and their
intersections \cite{HKY}, \cite{K1}.

In our paper we consider central Cantor sets $C\left( \lambda \right) $,
which we define using a sequence $\lambda =\left( \lambda _{n}\right) \in
\left( 0,\frac{1}{2}\right) ^{\mathbb{N}}$. These are Cantor sets such that
the ratio of measures of the intervals defined in subsequent steps of the
construction is equal to $\lambda _{n}$ (for the Cantor ternary set, $\lambda
_{n}\equiv \frac{1}{3}$). We are interested in properties of the difference
set $C(\lambda )-C(\lambda )$. Several authors examined sums and
differences of Cantor sets (see \cite{AI}, \cite{K1} \cite{K}, \cite{FF}, 
\cite{Ta}, \cite{S}). Considerations about sums and differences of central
Cantor sets were often based on the relationship between central Cantor sets
and the achievement sets (i.e. sets of all subsums) of convergent series
(see Proposition \ref{Pr2}).

The study of topological properties of the sets of subsums of series has
a long history.\ The first papers on this topic were written over a hundred
years ago. In 1914 Kakeya \cite{Ka} proved that, if $x_{n}>\sum_{j=n+1}^{%
\infty }x_{j}$ for any $n$, then the set $E\left( x\right) $ of all subsums
of a series $\sum_{j=1}^{\infty }x_{j}$ of positive numbers is a Cantor
set. He also showed that $E\left( x\right) $ is a finite union of closed
intervals if and only if $x_{n}\leq \sum_{j=n+1}^{\infty }x_{j}$ for almost
all $n$'s. Kakeya conjectured that the set $E\left( x\right) $ is either a
Cantor set or a finite union of closed intervals. The first examples showing
that this hypothesis is false appeared in the papers by Weinstein and
Shapiro \cite{WS}, and by Ferens \cite{Fe}. Guthrie and Nymann \cite{GN}
formulated the theorem that $E\left( x\right) $ is either a Cantor set or a
finite union of closed intervals or a Cantorval (compare also \cite{NS}). 
Nice descriptions of these problems can be found in \cite{N} and \cite{BFP}. 
Recently, there have appeared a lot of interesting papers that explore properties of
achievement sets, i.a. \cite{BBFS}, \cite{BP}, \cite{FF}, \cite{BGM}, and \cite%
{B}.

Based on results of sets of subsums of series, Anisca and Ilie in
\cite{AI} proved that a finite sum of central Cantor sets is either a
Cantor set or a finite union of closed intervals, or a Cantorval. In
particular, the set $C(\lambda )-C(\lambda )$ has this property (compare
Theorem \ref{AI}). Moreover, $C(\lambda )-C(\lambda )$ is a finite union of
closed intervals if and only if $\lambda _{n}\geq \frac{1}{3}$ for almost
all $n$'s (see Theorem \ref{tw1}). A main goal of our paper is to find
conditions which imply that the difference set $C\left( \lambda \right)
-C\left( \lambda \right) $ is a Cantorval. We examine sets $C\left( \lambda
\right) -C\left( \lambda \right) $ for the sequences such that $\lambda _{n}<%
\frac{1}{3}$\ for infinitely many terms and $\lambda _{n}\geq \frac{1}{3}$\
for infinitely many terms. The main result is Theorem \ref{Th1} containing
a sufficient condition for the set $C\left( \lambda \right) -C\left(
\lambda \right) $ to be a Cantorval. This theorem gives also a formula for
the measure of such a Cantorval (written as the sum of the series) and some
information about its interior. The proof of the theorem is based on the
properties resulting from the construction of central Cantor sets and it
does not use methods related to the study of achievement sets. However, we
use relationships between central Cantor sets and achievement sets in
Theorem \ref{th5} describing the sets that meet the assumptions of Theorem %
\ref{Th1}. In Theorem \ref{tw6} we get a simple formula for the measure of
considered Cantorvals written as achievement sets.

Let us introduce basic notation. For $A,B\subset \mathbb{R}$ we denote by $A\pm B$ the set $\left\{ a\pm
b:a\in A,\,b\in B\right\} $. The set $A-A$ is called the\emph{\ difference
set} of $A$. The Lebesgue measure of a measurable set $A$ is denoted by $%
\left\vert A\right\vert $. By $l\left( I\right) $, $r\left( I\right) $ and $%
c\left( I\right) $ we denote the left endpoint, the right endpoint and the
center of a bounded interval $I$, respectively. The sequence $(i,\dots ,i)$
with $n$ terms is denoted by $i^{(n)}$. By $t|n$ we denote the sequence
consisting of the first $n$ terms of a given sequence $t$. If $t|n=s$,
we say that $t$ is an \emph{extension} of $s$ and write $s\prec t$. To
denote the concatenation of two sequences $t$ and $s$, we write $t\symbol{94}%
s$.

The construction of a central Cantor subset of $[0,1]$ is the following (see 
\cite{BFN}).

Let $\lambda =\left( \lambda _{n}\right) $ be a sequence such that $\lambda
_{n}\in \left( 0,\frac{1}{2}\right) $ for any $n\in \mathbb{N}$. Let $I:=\left[
0,1\right] $ and $P$ be the open interval such that $c\left( P\right)
=c\left( I\right) $ and $\left\vert P\right\vert =\left( 1-2\lambda
_{1}\right) \left\vert I\right\vert $. The left and the right components of $%
I\setminus P$ are denoted by $I_{0}$ and $I_{1}$, we write $d_{1}:=\left\vert
I_{0}\right\vert =\left\vert I_{1}\right\vert $, and we define open intervals, of length $\left( 1-2\lambda _{2}\right) d_{1}$%
, that are concentric with $I_{0}$ and $I_{1}$ as $P_{0}$ and $%
P_{1}$, respectively. In general, $%
I_{t_{1},\dots ,t_{n},0}$ and $I_{t_{1},\dots ,t_{n},1}$ are the left and
the right components of the set $I_{t_{1},\dots ,t_{n}}\setminus
P_{t_{1},\dots ,t_{n}}$, $d_{n+1}$ is the length of each of these
components, and $P_{t_{1},\dots ,t_{n},0}$ and $P_{t_{1},\dots ,t_{n},1}$
are open intervals of length $\left( 1-2\lambda _{n+2}\right) d_{n+1}$,
concentric with $I_{t_{1},\dots ,t_{n},0}$ and $I_{t_{1},\dots ,t_{n},1}$,
respectively.

From the construction it follows that the length of each interval $%
I_{t_{1},\dots ,t_{n}}$ is equal to 
\begin{equation*}
d_{n}=\lambda _{1}\cdot \ldots \cdot \lambda _{n}.
\end{equation*}%
Denote $C_{n}(\lambda ):=\bigcup_{(t_{1},\dots ,t_{n})\in
\{0,1\}^{n}}I_{t_{1},\dots ,t_{n}}$ and $C(\lambda ):=\bigcap_{n\in \mathbb{N%
}}C_{n}(\lambda )$. Then $C(\lambda )$ is called a \emph{central Cantor set}.

Let $J:=I-I=\left[ -1,1\right] $. For fixed $n\in \mathbb{N}$ and $s\in
\left\{ 0,1,2\right\} ^{n}$ we define the interval $J_{s}$ by $%
J_{s}:=I_{t}-I_{p}$, where $p,t\in \left\{ 0,1\right\} ^{n}$ satisfy $%
s_{i}=t_{i}-p_{i}+1$ for $i=1,\ldots ,n$. Observe that the definition of $%
J_{s}$ does not depend on the choice of $p\ $and $t$. Indeed, for $n=1$ we
have $J_{0}=I_{0}-I_{1}=\left[ -1,-1+2d_{1}\right] $, $%
J_{1}=I_{0}-I_{0}=I_{1}-I_{1}=\left[ -d_{1},d_{1}\right] $ and $%
J_{2}=I_{1}-I_{0}=\left[ 1-2d_{1},1\right] $. For $n\in \mathbb{N}$, $s\in
\left\{ 0,1,2\right\} ^{n}$ and $p,t\in \left\{ 0,1\right\} ^{n}$ such that $%
J_{s}=I_{t}-I_{p}$, we have%
\begin{eqnarray*}
J_{s\symbol{94}0} &=&I_{t\symbol{94}0}-I_{p\symbol{94}1}=\left[ l\left(
I_{t}\right) ,l\left( I_{t}\right) +d_{n+1}\right] -\left[ r\left(
I_{p}\right) -d_{n+1},r\left( I_{p}\right) \right]  \\
&=&\left[ l(J_{s}),l(J_{s})+2d_{n+1}\right] , \\
J_{s\symbol{94}1} &=&I_{t\symbol{94}0}-I_{p\symbol{94}0}=I_{t\symbol{94}%
1}-I_{p\symbol{94}1}=[c(J_{s})-d_{n+1},c(J_{s})+d_{n+1}], \\
J_{s\symbol{94}2} &=&I_{t\symbol{94}1}-I_{p\symbol{94}%
0}=[r(J_{s})-2d_{n+1},r(J_{s})],
\end{eqnarray*}%
which proves the correctness of the definition of $J_{s}$.

Let $n\in \mathbb{N}$, $s\in \left\{ 0,1,2\right\} ^{n}$, and $\lambda \in
\left( 0,\frac{1}{2}\right) ^{\mathbb{N}}$. If $\lambda _{n+1}<\frac{1}{3}$,
then the set $J_{s}\setminus \left( J_{s\symbol{94}0}\cup J_{s\symbol{94}%
1}\cup J_{s\symbol{94}2}\right) $ is a union of two open intervals. We
denote them by $G_{s}^{0}$ and $G_{s}^{1}$, and call the \emph{left} and the 
\emph{right gap} in $J_{s}$. If $\lambda _{n+1}\geq \frac{1}{3}$, then $J_{s%
\symbol{94}0}\cap J_{s\symbol{94}1}\neq \emptyset $ and $J_{s\symbol{94}%
1}\cap J_{s\symbol{94}2}\neq \emptyset $. We denote these intervals by $%
Z_{s}^{0}$ and $Z_{s}^{1}$, and we call them the \emph{left} and the \emph{%
right overlap} in $J_{s}$. We also assume that $d_{0}:=\left\vert
I\right\vert =1$ and that $\left\{ 0,1,2\right\} ^{0}$ contains only the
empty sequence $s=\emptyset $. For $s\in \left\{ 0,1,2\right\} ^{n}$ ($n\in 
\mathbb{N}$) we define the numbers 
\begin{equation*}
N\left( s,0\right) :=\max \left\{ j:s_{j}>0\right\} \text{ and }N\left(
s,1\right) :=\max \left\{ j:s_{j}<2\right\} ,
\end{equation*}%
where $\max \emptyset :=0$. From the above definitions we conclude the
following easy properties.

\begin{proposition}
\label{lem}Let $n,k\in \mathbb{N}$, $s,u\in \left\{ 0,1,2\right\} ^{n}$, and $%
\lambda \in \left( 0,\frac{1}{2}\right) ^{\mathbb{N}}$. The following
properties hold.

\begin{enumerate}
\item \label{1-1}$\left\vert J_{s}\right\vert =2d_{n}.$

\item \label{1-2}$l\left( J_{s\symbol{94}0^{\left( k\right) }}\right)
=l\left( J_{s}\right) ,c\left( J_{s\symbol{94}1^{\left( k\right) }}\right)
=c\left( J_{s}\right) $, and $r\left( J_{s\symbol{94}2^{\left( k\right)
}}\right) =r\left( J_{s}\right) .$

\item \label{1-2b}If $n>k$, then $d_{n-1}-d_{n}<d_{k-1}-d_{k}$.

\item \label{1-2a}$l\left( J_{s}\right) -l\left( J_{u}\right) =c\left(
J_{s}\right) -c\left( J_{u}\right) =r\left( J_{s}\right) -r\left(
J_{u}\right) =\sum_{r=1}^{n}\left( s_{r}-u_{r}\right) \cdot \left(
d_{r-1}-d_{r}\right) .$

\item \label{1-3}If $\lambda _{n+1}<\frac{1}{3}$, then $G_{s}^{0}=\left[
r\left( J_{s\symbol{94}0}\right) ,l\left( J_{s\symbol{94}1}\right) \right] $%
, $G_{s}^{1}=\left[ r\left( J_{s\symbol{94}1}\right) ,l\left( J_{s\symbol{94}%
2}\right) \right] $, and $\left\vert G_{s}^{0}\right\vert =\left\vert
G_{s}^{1}\right\vert =d_{n}-3d_{n+1}.$

\item \label{1-4}If $\lambda _{n+1}\geq \frac{1}{3}$, then $Z_{s}^{0}=\left[
l\left( J_{s\symbol{94}1}\right) ,r\left( J_{s\symbol{94}0}\right) \right] $%
, $Z_{s}^{1}=\left[ l\left( J_{s\symbol{94}2}\right) ,r\left( J_{s\symbol{94}%
1}\right) \right] $, and $\left\vert Z_{s}^{0}\right\vert =\left\vert
Z_{s}^{1}\right\vert =3d_{n+1}-d_{n}.$

\item \label{1-5}$C_{n}\left( \lambda \right) -C_{n}\left( \lambda \right)
=\bigcup_{t\in \left\{ 0,1,2\right\} ^{n}}J_{t}.$

\item \label{1-6}If $\lambda _{n+1}\geq \frac{1}{3},\ldots ,\lambda
_{n+k}\geq \frac{1}{3}$, then $J_{s}=\bigcup_{t\in \left\{ 0,1,2\right\}
^{n+k},s\prec t}J_{t}$ and $C_{n}\left( \lambda \right) -C_{n}\left( \lambda
\right) =C_{n+k}\left( \lambda \right) -C_{n+k}\left( \lambda \right) $.

\item \label{1-7}$C(\lambda )-C(\lambda )=\bigcap_{n\in \mathbb{N}%
}(C_{n}(\lambda )-C_{n}(\lambda ))$.

\item \label{1-8}$C(\lambda )+C(\lambda )=\left( C(\lambda )-C(\lambda
)\right) +1$.
\end{enumerate}

\begin{proof}
Conditions (\ref{1-1})-(\ref{1-2}), (\ref{1-3})-(\ref{1-4})\ and the
equality $l\left( J_{s}\right) -l\left( J_{u}\right) =c\left( J_{s}\right)
-c\left( J_{u}\right) =r\left( J_{s}\right) -r\left( J_{u}\right) $ are
obvious. Since $\lambda _{n}<\frac{1}{2}$, we have $%
d_{n-1}-d_{n}>d_{n}>d_{n}-d_{n+1}$, which implies (\ref{1-2b}). If $t\in
\left\{ 0,1,2\right\} ^{r}$, where $r\in \mathbb{N}\cup \left\{ \emptyset
\right\} $, then%
\begin{equation*}
c\left( J_{t\symbol{94}1}\right) -c\left( J_{t\symbol{94}0}\right) =c\left(
J_{t\symbol{94}2}\right) -c\left( J_{t\symbol{94}1}\right) =\frac{1}{2}%
\left\vert J_{t}\right\vert -\frac{1}{2}\left\vert J_{t\symbol{94}%
i}\right\vert =d_{r}-d_{r+1}\text{.}
\end{equation*}%
Hence by induction we get $c\left( J_{s}\right) =\sum_{r=1}^{n}s_{r}\cdot
\left( d_{r-1}-d_{r}\right) $, which gives (\ref{1-2a}). From the equality 
\begin{equation*}
C_{n}(\lambda )-C_{n}(\lambda )=\bigcup_{p\in
\{0,1\}^{n}}I_{p}-\bigcup_{q\in \{0,1\}^{n}}I_{q}=\bigcup_{p,q\in
\{0,1\}^{n}}(I_{p}-I_{q})=\bigcup_{t\in \{0,1,2\}^{n}}J_{t}
\end{equation*}%
we obtain (\ref{1-5}). If $\lambda _{n+1}\geq \frac{1}{3}$, then by (\ref%
{1-4}), we have $J_{s}=J_{s\symbol{94}0}\cup J_{s\symbol{94}1}\cup J_{s%
\symbol{94}2}$. Consequently, $J_{s}=\bigcup_{t\in \left\{ 0,1,2\right\}
^{n+1},s\prec t}J_{t}$, which implies (\ref{1-6}). To prove (\ref{1-7}) it
is enough to show that for any nonincreasing sequences $(A_{n})$ and $%
(B_{n}) $ of compact subsets of $\mathbb{R}$,%
\begin{equation*}
\bigcap_{n=1}^{\infty }A_{n}-\bigcap_{n=1}^{\infty
}B_{n}=\bigcap_{n=1}^{\infty }(A_{n}-B_{n}).
\end{equation*}%
The inclusion "$\subset $" is clear. Let $x\in \bigcap_{n=1}^{\infty
}(A_{n}-B_{n}).$ Then for any $n\in \mathbb{N}$ we have $x=\alpha _{n}-\beta
_{n}$, where $\alpha _{n}\in A_{n}$ and $\beta _{n}\in B_{n}$. Pick
subsequences $(\alpha _{k_{n}})$ and $(\beta _{k_{n}})$ convergent to $%
\alpha \in A_{1}$ and $\beta \in B_{1}$, respectively. Since sequences $%
(A_{n})$ and $(B_{n})$ are nonincreasing, we have that $\alpha \in
\bigcap_{n=1}^{\infty }A_{n}$ and $\beta \in \bigcap_{n=1}^{\infty }B_{n}$.
Hence 
\begin{equation*}
x=\lim\limits_{k\rightarrow \infty }(\alpha _{n_{k}}-\beta _{n_{k}})=\alpha
-\beta \in \bigcap_{n=1}^{\infty }A_{n}-\bigcap_{n=1}^{\infty }B_{n},
\end{equation*}%
which ends the proof of (\ref{1-7}). The sets $C_{n}(\lambda )$ are
symmetric with respect to $\frac{1}{2}$. Therefore, $C(\lambda )$ is also
symmetric with respect to $\frac{1}{2}$, which implies (\ref{1-8}).
\end{proof}
\end{proposition}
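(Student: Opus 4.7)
The proposition bundles together ten bookkeeping facts about the construction of $C(\lambda)-C(\lambda)$ via the intervals $J_s$, most of which follow immediately from the recursive definition given just before the statement. My plan is to verify them in roughly the order they are stated, pausing on the two nontrivial ones.

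Parts (\ref{1-1}), (\ref{1-2}), (\ref{1-3}) and (\ref{1-4}) are direct readings of the formulas for $J_{s^\frown 0}, J_{s^\frown 1}, J_{s^\frown 2}$ displayed in the definition: the length is $2d_n$ because $J_s = I_t - I_p$ with $|I_t|=|I_p|=d_n$, and iterating the ``first/middle/last third'' rule keeps left endpoints fixed under $^\frown 0^{(k)}$, centers fixed under $^\frown 1^{(k)}$, and right endpoints fixed under $^\frown 2^{(k)}$. For (\ref{1-2b}), I would just note $d_{n-1}-d_n = d_{n-1}(1-\lambda_n) > d_{n-1}/2 > d_{n-1}\lambda_n = d_n > d_n(1-\lambda_{n+1}) = d_n-d_{n+1}$, using $\lambda_n, \lambda_{n+1} \in (0,\tfrac12)$, then iterate to reach any $k<n$. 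For (\ref{1-3}) and (\ref{1-4}), the displayed endpoint formulas for $J_{s^\frown i}$ yield gap/overlap length $|J_{s^\frown 1}| - (l(J_{s^\frown 1}) - l(J_{s^\frown 0})) = 2d_{n+1} - (d_n-d_{n+1}) \cdot (\pm 1)$, signed according to whether $\lambda_{n+1}$ is below or above $\tfrac13$.

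For (\ref{1-2a}), the natural route is induction on $n$: having established that successively concatenating $0$, $1$, $2$ shifts the center of a child interval from its parent's center by exactly $d_r-d_{r+1}$ (already used implicitly above), one gets $c(J_s) = \sum_{r=1}^n s_r(d_{r-1}-d_r)$ (since the center of $J_\emptyset = [-1,1]$ is $0$ and each step adds $s_r(d_{r-1}-d_r)$). Subtracting the analogous formula for $c(J_u)$ gives the claim for centers, and because $|J_s|=|J_u|$ the same difference governs $l$ and $r$. Part (\ref{1-5}) is pure set algebra: $A-B = \bigcup_{a,b}(I_t-I_p)$, and the reindexing $s_i = t_i - p_i + 1$ is exactly the three-valued ``difference'' alphabet. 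Part (\ref{1-6}) then follows by induction on $k$ using (\ref{1-4}) — when $\lambda_{n+1} \geq \tfrac13$ the three children $J_{s^\frown 0}, J_{s^\frown 1}, J_{s^\frown 2}$ overlap and cover $J_s$ — and the resulting equality of $C_n-C_n$ with $C_{n+k}-C_{n+k}$ comes from (\ref{1-5}).

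The only step needing real care is (\ref{1-7}), where I would prove the general lemma that $\bigcap_n A_n - \bigcap_n B_n = \bigcap_n (A_n - B_n)$ for any nonincreasing sequences of nonempty compacts in $\R$. The inclusion ``$\subset$'' is immediate. For ``$\supset$'', given $x \in \bigcap_n(A_n - B_n)$, write $x = \alpha_n - \beta_n$ with $\alpha_n \in A_n$, $\beta_n \in B_n$; since $A_1$ and $B_1$ are compact, extract a common subsequence along which $\alpha_{n_k} \to \alpha$ and $\beta_{n_k} \to \beta$; the monotonicity $A_{n+1} \subset A_n$ (and likewise for $B$) forces $\alpha \in \bigcap_n A_n$ and $\beta \in \bigcap_n B_n$, and $x = \alpha - \beta$. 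Finally (\ref{1-8}) is symmetry: each $C_n(\lambda)$, hence $C(\lambda)$, is invariant under $x \mapsto 1-x$, so $C(\lambda) + C(\lambda) = C(\lambda) + (1 - C(\lambda)) = 1 + (C(\lambda) - C(\lambda))$. The main ``obstacle'' — really the only place where more than a line is required — is the compactness extraction in (\ref{1-7}); everything else is either induction from the recursive definition or direct arithmetic with $d_n = \lambda_1 \cdots \lambda_n$.
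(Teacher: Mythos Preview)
Your proposal is correct and follows essentially the same route as the paper's own proof: parts (\ref{1-1})--(\ref{1-2}) and (\ref{1-3})--(\ref{1-4}) are read off from the recursive endpoint formulas, (\ref{1-2b}) is the one-step inequality $d_{n-1}-d_n>d_n>d_n-d_{n+1}$ iterated, (\ref{1-2a}) is the inductive center formula, (\ref{1-5})--(\ref{1-6}) are set algebra plus the overlap observation, (\ref{1-7}) is the general compactness lemma with subsequence extraction, and (\ref{1-8}) is the symmetry $x\mapsto 1-x$. The only cosmetic slip---shared by the paper---is that the absolute formula should read $c(J_s)=\sum_{r=1}^n (s_r-1)(d_{r-1}-d_r)$ rather than $\sum s_r(d_{r-1}-d_r)$, but this constant offset cancels in the difference $c(J_s)-c(J_u)$, which is all that is claimed.
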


A set is said to be \emph{regular closed} if it is equal to the closure of
its interior. A non-empty regular closed bounded subset $C$ of $\mathbb{R}$
is called a \emph{Cantorval}, if for every connected component $I$ of $C$
with nonempty interior, the both endpoints of $I$ are accumulation points of
the union of one-point components of $C$ (see \cite{BBFS}, \cite{BGM}).

\section{Difference sets of central Cantor sets}

One of the first results concerning the difference sets of central Cantor sets was proved by Kraft in \cite{K}. He showed that, if a sequence $\lambda =\left( \lambda _{n}\right) \in \left( 0,%
\frac{1}{2}\right) ^{\mathbb{N}}\ $ is constant, that is, $\lambda _{n}=\alpha 
$ for any $n\in \mathbb{N}$, then $C\left(
\lambda \right) -C\left( \lambda \right) = [-1,1]$ if and only if $\alpha \geq \frac{1}{3}$. Moreover, if $\alpha < \frac{1}{3}$, then $C\left(
\lambda \right) -C\left( \lambda \right)$ is a Cantor set.
Anisca and Ilie proved an important trichotomy theorem on finite sums of central Cantor sets in \cite[Theorem 2]{AI}.
Below we present a particular version of their result, which corresponds to our considerations.
\begin{theorem}[\protect\cite{AI}]
\label{AI} For any sequence $\lambda \in \left( 0,\frac{1}{2}\right) ^{%
\mathbb{N}}$, the set $C\left( \lambda \right) -C\left( \lambda \right) $
has one of the following fashions:

\begin{enumerate}
\item a finite union of closed intervals;

\item a Cantor set;

\item a Cantorval.
\end{enumerate}
\end{theorem}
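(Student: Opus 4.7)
The plan is to reduce the statement to the classical Guthrie--Nymann trichotomy (cf.\ \cite{GN}, \cite{NS}): for any convergent series of positive terms, the achievement set is a finite union of closed intervals, a Cantor set, or a Cantorval. Given this, only two things need to be done: represent $C(\lambda)+C(\lambda)$ as such an achievement set, and then transfer the conclusion to $C(\lambda)-C(\lambda)$ by a translation.

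First I would exhibit the achievement-set representation. Unfolding the construction shows that every $x\in C(\lambda)$ has a digit expansion
$$x=\sum_{n=1}^{\infty}\varepsilon_{n}(d_{n-1}-d_{n}),\qquad\varepsilon_{n}\in\{0,1\},$$
since at level $n$ the choice $I_{t\symbol{94}1}$ in place of $I_{t\symbol{94}0}$ shifts the enclosing interval rightward by exactly $d_{n-1}-d_{n}$. Adding two such expansions yields
$$C(\lambda)+C(\lambda)=\left\{\sum_{n=1}^{\infty}(\varepsilon_{n}+\varepsilon_{n}')(d_{n-1}-d_{n}):\varepsilon_{n},\varepsilon_{n}'\in\{0,1\}\right\}=E(y),$$
where $y$ is the series whose $2n$-th and $(2n-1)$-st terms both equal $a_{n}:=d_{n-1}-d_{n}>0$. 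The sum is telescoping: $\sum_{n}y_{n}=2\sum_{n}(d_{n-1}-d_{n})=2$, so $y$ is a convergent positive series. (One even sees that $y$ is nonincreasing: $a_{n+1}/a_{n}=\lambda_{n}(1-\lambda_{n+1})/(1-\lambda_{n})<1$ because $\lambda_{n}<\tfrac12$.)

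Second, applying the Guthrie--Nymann trichotomy to $E(y)$ immediately places $C(\lambda)+C(\lambda)$ in one of the three classes. By item (\ref{1-8}) of Proposition~\ref{lem}, $C(\lambda)-C(\lambda)=\bigl(C(\lambda)+C(\lambda)\bigr)-1$ is merely a translate of $C(\lambda)+C(\lambda)$, so it inherits the same classification. The only nontrivial content is the achievement-set representation plus the appeal to the trichotomy for $E(y)$; the translation step is free. This is essentially the route taken in \cite{AI}, where the same duplication-of-digits trick is iterated to handle an arbitrary finite sum of central Cantor sets, explaining the more general statement of their Theorem~2.
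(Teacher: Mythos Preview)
Your argument is correct. Note, however, that the paper does not supply its own proof of this theorem: it is quoted as a particular case of \cite[Theorem~2]{AI}, and the introduction explicitly says that Anisca and Ilie's proof is ``based on results of sets of subsums of series.'' Your reconstruction---write $C(\lambda)=E\bigl((d_{n-1}-d_n)_n\bigr)$ (which is exactly Proposition~\ref{Pr2}(1) with $x_j=d_{j-1}-d_j$), duplicate the terms to get $C(\lambda)+C(\lambda)=E(y)$, invoke the Guthrie--Nymann trichotomy, and translate via Proposition~\ref{lem}(\ref{1-8})---is precisely that route, and the paper itself uses the identity $E(x)+E(x)=E(x_1,x_1,x_2,x_2,\ldots)$ at the start of Section~3. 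So there is no genuine comparison to make: your proposal agrees with the cited proof and with the way the paper frames the result.
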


We will begin our discussion on the difference sets of central Cantor sets by
proving that $C\left( \lambda \right) -C\left( \lambda \right) $ is a finite
union of intervals if and only if $\lambda _{n}\geq \frac{1}{3}$ for almost
all $n$'s. Similar results, with proofs based on other tools, were
shown by Anisca and Ilie (see \cite[Theorem 3]{AI}).

\begin{theorem}
\label{tw1} Let $\lambda =\left( \lambda _{n}\right) \in \left( 0,\frac{1}{2}%
\right) ^{\mathbb{N}}$. The following statements hold.

\begin{enumerate}
\item \label{tw1-1}$C\left( \lambda \right) -C\left( \lambda \right) =\left[
-1,1\right] $ if and only if $\lambda _{n}\geq \frac{1}{3}$ for all $n\in 
\mathbb{N}$.

\item \label{tw1-2}$C\left( \lambda \right) -C\left( \lambda \right) $ is a
finite union of intervals if and only if the set $\left\{ n\in \mathbb{N}%
:\lambda _{n}<\frac{1}{3}\right\} $ is finite.
\end{enumerate}
\end{theorem}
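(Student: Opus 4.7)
The plan is to prove both halves by exhibiting points missing from $C(\lambda)-C(\lambda)$ (for the ``only if'' directions) and by iterating Proposition \ref{lem}(\ref{1-6}) (for the ``if'' directions).

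For (\ref{tw1-1})($\Leftarrow$), assuming $\lambda_n \geq \frac{1}{3}$ for every $n$, I would argue by induction that $C_n(\lambda)-C_n(\lambda) = [-1,1]$: the base case $n=0$ is $J_\emptyset = [-1,1]$, and the inductive step is Proposition \ref{lem}(\ref{1-6}) with $k=1$. Then (\ref{1-7}) yields $C(\lambda)-C(\lambda) = \bigcap_n [-1,1] = [-1,1]$. The opposite inclusion is automatic from $C(\lambda) \subset [0,1]$.

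For (\ref{tw1-1})($\Rightarrow$), suppose $\lambda_{n_0} < \frac{1}{3}$ for some $n_0$. Let $s := 0^{(n_0-1)}$; by (\ref{1-2}) we have $l(J_s) = -1$. Since $\lambda_{n_0} < \frac{1}{3}$, Proposition \ref{lem}(\ref{1-3}) gives a genuine gap $G_s^0$ of positive length inside $J_s$. The natural candidate for a missing point is the midpoint $y := -1 + \tfrac{1}{2}(d_{n_0-1} + d_{n_0})$ of this gap. Using (\ref{1-5}), I need to check that $y \notin J_t$ for every $t \in \{0,1,2\}^{n_0}$, and then (\ref{1-7}) combined with the monotonicity $C(\lambda)-C(\lambda) \subset C_{n_0}(\lambda)-C_{n_0}(\lambda)$ gives $y \notin C(\lambda)-C(\lambda)$. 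The verification splits into two cases. If $t|(n_0-1) = 0^{(n_0-1)}$, then $t$ is $s\symbol{94}0$, $s\symbol{94}1$, or $s\symbol{94}2$, and a direct check using the formulas for $l$, $c$, $r$ of these intervals together with $d_{n_0-1} > 3d_{n_0}$ shows $y$ lies strictly between $r(J_{s\symbol{94}0})$ and $l(J_{s\symbol{94}1})$. If $t|(n_0-1) \neq 0^{(n_0-1)}$, there is some $r \in \{1,\dots,n_0-1\}$ with $t_r \geq 1$, and (\ref{1-2a}) with $u = 0^{(n_0)}$ gives $l(J_t) \geq l(J_u) + (d_{r-1}-d_r)$. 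Now (\ref{1-2b}) yields $d_{r-1} - d_r > d_{n_0-1} - d_{n_0}$, which is in turn strictly greater than $\tfrac{1}{2}(d_{n_0-1}+d_{n_0})$ precisely because $\lambda_{n_0} < \frac{1}{3}$. Hence $l(J_t) > y$, as required.

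For (\ref{tw1-2})($\Leftarrow$), if $\{n : \lambda_n < \frac{1}{3}\}$ is empty then (\ref{tw1-1}) is done, and otherwise let $N$ be its maximum. Then $\lambda_n \geq \frac{1}{3}$ for all $n > N$, so Proposition \ref{lem}(\ref{1-6}) applied at level $N$ with arbitrary $k$ gives $C_N(\lambda)-C_N(\lambda) = C_{N+k}(\lambda)-C_{N+k}(\lambda)$. Combined with the nonincreasing nature of the sequence $(C_n(\lambda)-C_n(\lambda))$ and (\ref{1-7}), this yields $C(\lambda)-C(\lambda) = C_N(\lambda)-C_N(\lambda)$, a finite union of closed intervals by (\ref{1-5}). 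For (\ref{tw1-2})($\Rightarrow$), I would argue by contradiction: suppose $\lambda_n < \frac{1}{3}$ for infinitely many $n$, but $C(\lambda)-C(\lambda)$ is a finite union of closed intervals. Since $-1 = 0 - 1 \in C(\lambda)-C(\lambda)$, the leftmost component has the form $[-1,a]$ with $a > -1$. But for every $n$ in the (infinite) set $\{n : \lambda_n < \frac{1}{3}\}$, the argument of (\ref{tw1-1})($\Rightarrow$) produces a point $y_n = -1 + \tfrac{1}{2}(d_{n-1}+d_n) \notin C(\lambda)-C(\lambda)$, and these points tend to $-1$; choosing $n$ large enough so that $y_n \in (-1,a)$ gives the contradiction.

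The only nontrivial step is the case analysis showing $y \notin J_t$ for $t|(n_0-1) \neq 0^{(n_0-1)}$, but it is quickly dispatched by combining (\ref{1-2b}) with the elementary inequality $d_{n_0-1} - d_{n_0} > \tfrac{1}{2}(d_{n_0-1}+d_{n_0})$ equivalent to $\lambda_{n_0} < \tfrac{1}{3}$; everything else is a bookkeeping consequence of Proposition \ref{lem}.
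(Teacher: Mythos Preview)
Your proof is correct and follows essentially the same route as the paper's: both arguments hinge on showing that the leftmost gap $G^0_{0^{(n-1)}}$ (at a level $n$ with $\lambda_n<\tfrac13$) is disjoint from every $J_t$ with $t\in\{0,1,2\}^n$, using Proposition~\ref{lem}(\ref{1-2a}) and (\ref{1-2b}). The only cosmetic difference is that you fix the midpoint of the gap and verify $y\notin J_t$ directly, whereas the paper takes an arbitrary $x\in G^0_{0^{(n-1)}}$, assumes it lies in some $J_s$ by the hypothesised interval containment, and derives the contradiction $l(J_s)<l(J_s)$ from the same inequalities.
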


\begin{proof}
We will only prove (\ref{tw1-2}) (the proof of (\ref{tw1-1}) is similar).

"$\Leftarrow $" Let $k\in \mathbb{N}$ be such that $\lambda _{n}\geq \frac{1%
}{3}$ for all $n\geq k$. From Proposition \ref{lem} it follows that $%
C_{n+1}\left( \lambda \right) -C_{n+1}\left( \lambda \right) =C_{n}\left(
\lambda \right) -C_{n}\left( \lambda \right) $ for $n\geq k$, and
consequently 
\begin{equation*}
C\left( \lambda \right) -C\left( \lambda \right) =C_{k}\left( \lambda
\right) -C_{k}\left( \lambda \right) =\bigcup_{s\in \left\{ 0,1,2\right\}
^{k}}J_{s}.
\end{equation*}

"$\Rightarrow $" Assume that $C\left( \lambda \right) -C\left( \lambda
\right) $ is a finite union of closed intervals, but $\left\{ n\in \mathbb{N}%
:\lambda _{n}<\frac{1}{3}\right\} $ is infinite. Then there are $w>0$ and $%
n\in \mathbb{N}$ such that $\left[ -1,-1+w\right] \subset C\left( \lambda
\right) -C\left( \lambda \right) $, $2d_{n-1}<w$, and $\lambda _{n}<\frac{1}{3%
}$. Let $x\in G_{0^{\left( n-1\right) }}^{0}$. Since 
\begin{equation*}
x\in J_{0^{\left( n-1\right) }}=\left[ -1,-1+2d_{n-1}\right] \subset \left[
-1,-1+w\right] \subset C\left( \lambda \right) -C\left( \lambda \right)
\subset C_{n}\left( \lambda \right) -C_{n}\left( \lambda \right) ,
\end{equation*}%
there is $s\in \left\{ 0,1,2\right\} ^{n}$ such that $x\in J_{s}$. The
sequence $s$ is not an extension of $0^{\left( n-1\right) }$ because $x\in
G_{0^{\left( n-1\right) }}^{0}$. Hence $s_{k}>0$ for some $k<n$, and
consequently 
\begin{eqnarray*}
l\left( J_{s}\right) &\leq &x<r\left( G_{0^{\left( n-1\right) }}^{0}\right)
=l\left( J_{0^{\left( n-1\right) }\symbol{94}1}\right) =-1+d_{n-1}-d_{n} \\
&\leq &-1+d_{k-1}-d_{k}=l\left( J_{0^{\left( k-1\right) }\symbol{94}%
1}\right) \leq l\left( J_{s}\right) ,
\end{eqnarray*}%
a contradiction.
\end{proof}

Our goal is to find sufficient conditions for the set $C\left( \lambda
\right) -C\left( \lambda \right) $ to be a Cantorval. From Theorem \ref{tw1}
it follows that we need to assume that $\lambda _{n}<\frac{1}{3}$ for
infinitely many terms. From Theorem \ref{AI} we infer that, for a sequence $%
\lambda $ satisfying the above condition, it suffices to prove that $C\left(
\lambda \right) -C\left( \lambda \right) $ has nonempty interior. To show
this, we will take an interval $J_{t}$ and define the family of gaps $%
\mathcal{G}_{t}$ such that $J_{t}\setminus \bigcup \mathcal{G}_{t}\subset
C\left( \lambda \right) -C\left( \lambda \right) $ and $\text{int}\left(
J_{t}\setminus \bigcup \mathcal{G}_{t}\right) \neq \emptyset $. The proof of
the second condition requires the assumption that $\lambda _{n}\geq \frac{1}{%
3}$ for infinitely many terms (Theorem \ref{tw4}). To prove the first
condition, we will need far stronger assumptions (Theorem \ref{Th1}).

We now introduce additional notation and definitions used in the next
theorems. We consider sequences such that $\lambda _{n}<\frac{1}{3}$ for
infinitely many terms. For a fixed sequence $\lambda $ (and a fixed number $%
k_{0}$) we need the sequence $\left( k_{n}\right) $ consisting of all
indices greater than $k_{0}$, for which $\lambda _{k_{n}}<\frac{1}{3}$, and
the families $\mathcal{G}_{t}\left( n\right) $ of some particular gaps in intervals $J_{s}$
for $s\in \left\{ 0,1,2\right\} ^{k_{n}-1}$, $t \prec s$.

Let $\lambda =\left( \lambda _{j}\right) _{j\in \mathbb{N}}\in \left( 0,%
\frac{1}{2}\right) ^{\mathbb{N}}$ be such that $\lambda _{j}<\frac{1}{3}$
for infinitely many terms and $\lambda _{k_{0}+1}>\frac{1}{3}$ for some $%
k_{0}\in \mathbb{N}\cup \left\{ 0\right\} $. By $\left( k_{n}\right) _{n\in 
\mathbb{N}}$ we denote an increasing sequence such that $\left( \lambda
_{k_{n}}\right) _{n\in \mathbb{N}}$ is a subsequence of the sequence $\left(
\lambda _{j}\right) _{j>k_{0}}$ consisting of all terms which are less than $%
\frac{1}{3}$.

Fix $k\geq k_{0}$ and $t\in \left\{ 0,1,2\right\} ^{k}$. There is a unique $%
m\in \mathbb{N}$ such that $k_{m-1}\leq k<k_{m}$. For $n\geq m$ we define
families $\mathcal{G}_{t}\left( n\right) \subset \left\{ G_{s}^{i}:s\in
\left\{ 0,1,2\right\} ^{k_{n}-1},i\in \left\{ 0,1\right\} \right\} $
inductively as follows.

First, let $\widetilde{\mathcal{G}_{t}}\left( n\right) :=\left\{ G_{t\symbol{%
94}0^{\left( k_{n}-k-1\right) }}^{0},G_{t\symbol{94}2^{\left(
k_{n}-k-1\right) }}^{1}\right\} $ for $n\geq m$. Then put $\mathcal{G}%
_{t}\left( m\right) :=\widetilde{\mathcal{G}_{t}}\left( m\right) $. Assume
that we have defined families $\mathcal{G}_{t}\left( l\right) $ for $m\leq
l\leq n$. Then define $\mathcal{G}_{t}\left( n+1\right) $ as 
\begin{equation*}
\widetilde{\mathcal{G}_{t}}\left( n+1\right) \cup \bigcup_{l=m}^{n}\left(
\left\{ G_{s\symbol{94}\left( i+1\right) \symbol{94}0^{\left(
k_{n+1}-k_{l}-1\right) }}^{0}:G_{s}^{i}\in \mathcal{G}_{t}\left( l\right)
\right\} \cup \left\{ G_{s\symbol{94}i\symbol{94}2^{\left(
k_{n+1}-k_{l}-1\right) }}^{1}:G_{s}^{i}\in \mathcal{G}_{t}\left( l\right)
\right\} \right) .
\end{equation*}%
Geometrically, the family $\widetilde{\mathcal{G}_{t}}\left( n\right) $
consists of the leftmost gap and the rightmost gap in the interval $J_{t}$ which
appear in $C_{k_{n}}\left( \lambda \right) -C_{k_{n}}\left( \lambda \right) $%
. The family $\mathcal{G}_{t}\left( n\right) $ consists of the gaps from $%
\widetilde{\mathcal{G}_{t}}\left( n\right) $ and the gaps that appear in $%
C_{k_{n}}\left( \lambda \right) -C_{k_{n}}\left( \lambda \right) $ which are
nearest the gaps from $\bigcup_{l=m}^{n-1}\mathcal{G}_{t}\left( l\right) $.

Moreover, let $\mathcal{G}_{t}:=\bigcup_{n\geq m}\mathcal{G}_{t}\left(
n\right) $ and%
\begin{eqnarray*}
p\left( n\right) \left. :=\right. t\symbol{94}0^{\left( k_{m}-k-1\right) }%
\symbol{94}1\symbol{94}0^{\left( k_{m+1}-k_{m}-1\right) }\symbol{94}1\symbol{%
94}\ldots \symbol{94}1\symbol{94}0^{\left( k_{n}-k_{n-1}-1\right) } &\in
&\left\{ 0,1,2\right\} ^{k_{n}-1}, \\
q\left( n\right) \left. :=\right. t\symbol{94}2^{\left( k_{m}-k-1\right) }%
\symbol{94}1\symbol{94}2^{\left( k_{m+1}-k_{m}-1\right) }\symbol{94}1\symbol{%
94}\ldots \symbol{94}1\symbol{94}2^{\left( k_{n}-k_{n-1}-1\right) } &\in
&\left\{ 0,1,2\right\} ^{k_{n}-1}.
\end{eqnarray*}

\begin{lemma}
\label{lem2}Assume that $\lambda =\left( \lambda _{j}\right) _{j\in \mathbb{N%
}}\in \left( 0,\frac{1}{2}\right) ^{\mathbb{N}}$ is a sequence such that: $%
\lambda _{n}<\frac{1}{3}$\ for infinitely many terms, $\lambda _{n}\geq 
\frac{1}{3}$\ for infinitely many terms, and there is $k_{0}\in \mathbb{N}%
\cup \left\{ 0\right\} $ such that $\lambda _{k_{0}+1}>\frac{1}{3}$. Let $%
k\geq k_{0}$, $t\in \left\{ 0,1,2\right\} ^{k}$, and $m\in \mathbb{N}$ be
such that $k_{m-1}\leq k<k_{m}$. The following statements hold.

\begin{enumerate}
\item \label{lem2-1}The sequence $r\left( G_{p\left( n\right) }^{0}\right) $
is nondecreasing, the sequence $l\left( G_{q\left( n\right) }^{1}\right) $
is nonincreasing, 
\begin{equation*}
\lim_{n\rightarrow \infty }r\left( G_{p\left( n\right) }^{0}\right)
<\lim_{n\rightarrow \infty }l\left( G_{q\left( n\right) }^{1}\right) ,
\end{equation*}%
and for any $n\geq m$, we have%
\begin{equation*}
r\left( G_{p\left( n\right) }^{0}\right) =l\left( J_{t}\right)
+\sum_{l=m}^{n}\left( d_{k_{l}-1}-d_{k_{l}}\right) \quad \text{and}\quad
l\left( G_{q\left( n\right) }^{1}\right) =r\left( J_{t}\right)
-\sum_{l=m}^{n}\left( d_{k_{l}-1}-d_{k_{l}}\right) .
\end{equation*}

\item \label{lem2-2}If $n\geq m$ and $G\in \mathcal{G}_{t}\left( n\right) $,
then 
\begin{equation*}
r\left( G\right) \leq r\left( G_{p\left( n\right) }^{0}\right) \text{\quad
or\quad }l\left( G\right) \geq l\left( G_{q\left( n\right) }^{1}\right) .
\end{equation*}

\item \label{lem2-0}If $n\geq M>m$, $k_{M-1}\leq r<k_{M}$, $w\in \left\{
0,1,2\right\} ^{r}$, and $t\prec w$, then 
\begin{equation*}
\mathcal{G}_{t}\left( n\right) \cap \left\{ G_{s}^{i}:s\in \left\{
0,1,2\right\} ^{k_{n}-1},w\prec s,i\in \left\{ 0,1\right\} \right\} \subset 
\mathcal{G}_{w}\left( n\right) .
\end{equation*}

\item \label{lem2-4}If $k=k_{0}=0$, $t=\emptyset $, $n\in \N$, $s\in \left\{
0,1,2\right\} ^{k_{n}-1}$, $i\in \left\{ 0,1\right\} $, and $G_{s}^{i}\in 
\mathcal{G}_{t}$, then 
\begin{gather*}
\left. G_{s}^{i}\cap J_{v}=\emptyset \right. \text{ if }v\in \left\{
0,1,2\right\} ^{k_{n}-1},v\neq s, \\
\left. G_{s}^{i}\cap G_{u}^{j}=\emptyset \right. \text{ if }G_{u}^{j}\in 
\mathcal{G}_{t},\left( u,j\right) \neq \left( s,i\right) .
\end{gather*}
\end{enumerate}

\begin{proof}
Ad (\ref{lem2-1}). From Proposition \ref{lem}(\ref{1-2a}) we have%
\begin{equation*}
l\left( J_{p\left( n\right) \symbol{94}1}\right) -l\left( J_{t}\right)
=l\left( J_{p\left( n\right) \symbol{94}1}\right) -l\left( J_{t\symbol{94}%
0^{\left( k_{n}-k\right) }}\right) =\sum_{l=m}^{n}\left(
d_{k_{l}-1}-d_{k_{l}}\right) ,
\end{equation*}%
and therefore $r\left( G_{p\left( n\right) }^{0}\right) =l\left( J_{p\left(
n\right) \symbol{94}1}\right) =l\left( J_{t}\right) +\sum_{l=m}^{n}\left(
d_{k_{l}-1}-d_{k_{l}}\right) $. Hence the sequence $r\left( G_{p\left(
n\right) }^{0}\right) $ is nondecreasing. Similarly, we prove that $l\left(
G_{q\left( n\right) }^{1}\right) =r\left( J_{t}\right) -\sum_{l=m}^{n}\left(
d_{k_{l}-1}-d_{k_{l}}\right) $ and the sequence $l\left( G_{q\left( n\right)
}^{1}\right) $ is nonincreasing. Since the set $\mathbb{N}\setminus \left\{
k_{n}:n\in \mathbb{N}\right\} $ is infinite, we have%
\begin{eqnarray*}
\lim_{n\rightarrow \infty }l\left( G_{q\left( n\right) }^{1}\right)
-\lim_{n\rightarrow \infty }r\left( G_{p\left( n\right) }^{0}\right)
&=&r\left( J_{t}\right) -l\left( J_{t}\right) -2\sum_{l=m}^{\infty }\left(
d_{k_{l}-1}-d_{k_{l}}\right) \\
&>&\left\vert J_{t}\right\vert -2\sum_{r=k}^{\infty }\left(
d_{r}-d_{r+1}\right) =\left\vert J_{t}\right\vert -2d_{k}=0.
\end{eqnarray*}

Ad (\ref{lem2-2}). We prove it inductively. Observe that $p\left( m\right) =t%
\symbol{94}0^{\left( k_{m}-k-1\right) }$, $q\left( m\right) =t\symbol{94}%
2^{\left( k_{m}-k-1\right) }$, and $\mathcal{G}_{t}\left( m\right) =\left\{
G_{p\left( m\right) }^{0},G_{q\left( m\right) }^{1}\right\} $. Thus,
condition (\ref{lem2-2}) holds for $n=m$. Assume that $n\geq m$ and for any $%
l\in \left\{ m,\ldots ,n\right\} $ and $G\in \mathcal{G}_{t}\left( l\right) $
we have $r\left( G\right) \leq r\left( G_{p\left( l\right) }^{0}\right) \ $%
or\ $l\left( G\right) \geq l\left( G_{q\left( l\right) }^{1}\right) $. Let $%
G\in \mathcal{G}_{t}\left( n+1\right) $. We will prove that $r\left(
G\right) \leq r\left( G_{p\left( n+1\right) }^{0}\right) \ $or\ $l\left(
G\right) \geq l\left( G_{q\left( n+1\right) }^{1}\right) $. Let us consider
the cases.

1$^{\text{o}}$ $G=G_{t\symbol{94}0^{\left( k_{n+1}-k-1\right) }}^{0}$ or $%
G=G_{t\symbol{94}2^{\left( k_{n+1}-k-1\right) }}^{1}$. \newline
The inequalities $r\left( G_{t\symbol{94}0^{\left( k_{n+1}-k-1\right)
}}^{0}\right) <r\left( G_{p\left( n+1\right) }^{0}\right) $ and $l\left( G_{t%
\symbol{94}2^{\left( k_{n+1}-k-1\right) }}^{1}\right) >l\left( G_{q\left(
n+1\right) }^{1}\right) $ are obvious.

2$^{\text{o}}$ $G=G_{s\symbol{94}\left( i+1\right) \symbol{94}0^{\left(
k_{n+1}-k_{l}-1\right) }}^{0}$, where $l\in \left\{ m,\ldots ,n\right\} $, $%
i\in \left\{ 0,1\right\} $, and $G_{s}^{i}\in \mathcal{G}_{t}\left( l\right) $%
. \newline
By the inductive hypothesis and (\ref{lem2-1}) we get $r\left(
G_{s}^{i}\right) \leq r\left( G_{p\left( l\right) }^{0}\right) \leq r\left(
G_{p\left( n\right) }^{0}\right) $ or $l\left( G_{s}^{i}\right) \geq l\left(
G_{q\left( l\right) }^{1}\right) \geq l\left( G_{q\left( n\right)
}^{1}\right) $. In the first case, we have 
\begin{eqnarray*}
r\left( G\right)  &=&r\left( G_{s\symbol{94}\left( i+1\right) \symbol{94}%
0^{\left( k_{n+1}-k_{l}-1\right) }}^{0}\right) =l\left( J_{s\symbol{94}%
\left( i+1\right) \symbol{94}0^{\left( k_{n+1}-k_{l}-1\right) }}\right)
+\left( d_{k_{n+1}-1}-d_{k_{n+1}}\right)  \\
&=&r\left( G_{s}^{i}\right) +\left( d_{k_{n+1}-1}-d_{k_{n+1}}\right) \leq
r\left( G_{p\left( n\right) }^{0}\right) +\left(
d_{k_{n+1}-1}-d_{k_{n+1}}\right) =r\left( G_{p\left( n+1\right) }^{0}\right)
,
\end{eqnarray*}%
and in the second case,
\begin{equation*}
l\left( G\right) >l\left( J_{s\symbol{94}\left( i+1\right) }\right) >l\left(
G_{s}^{i}\right) \geq l\left( G_{q\left( n\right) }^{1}\right) >l\left(
G_{q\left( n+1\right) }^{1}\right) .
\end{equation*}

3$^{\text{o}}$ $G=G_{s\symbol{94}i\symbol{94}2^{\left(
k_{n+1}-k_{l}-1\right) }}^{1}$, where $l\in \left\{ m,\ldots ,n\right\} $, $%
i\in \left\{ 0,1\right\} $, and $G_{s}^{i}\in \mathcal{G}_{t}\left( l\right) $%
. \newline
The proof is analogous to the proof in the previous case.

Ad (\ref{lem2-0}). Fix $M$, $r$, $w$ such that $M>m$, $k_{M-1}\leq r<k_{M}$, 
$w\in \left\{ 0,1,2\right\} ^{r}$, $t\prec w$. Put 
\begin{equation*}
\mathcal{H}\left( n\right) :=\mathcal{G}_{t}\left( n\right) \cap \left\{
G_{s}^{i}:s\in \left\{ 0,1,2\right\} ^{k_{n}-1},w\prec s,i\in \left\{
0,1\right\} \right\} .
\end{equation*}%
We will prove inductively that $\mathcal{H}\left( n\right) \subset \mathcal{G%
}_{w}\left( n\right) $ for $n\geq M$. First, we will show that $\mathcal{H}%
\left( M\right) \subset \mathcal{G}_{w}\left( M\right) $. Let $G\in \mathcal{%
H}\left( M\right) $, that is $G=G_{s}^{i}\in \mathcal{G}_{t}\left( M\right) $%
, where $s\in \left\{ 0,1,2\right\} ^{k_{M}-1}$, $w\prec s$, $i\in \left\{
0,1\right\} $. Let us consider the cases.

1$^{\text{o}}$ $G=G_{s}^{0}$, where $s=t\symbol{94}0^{\left(
k_{M}-k-1\right) }$ or $G=G_{s}^{1}$, where $s=t\symbol{94}2^{\left(
k_{M}-k-1\right) }$.\newline
In the former case, $s=w\symbol{94}0^{\left( k_{M}-r-1\right) }$, so $G=G_{w%
\symbol{94}0^{\left( k_{M}-r-1\right) }}^{0}\in \mathcal{G}_{w}\left(
M\right) $, and in the other case, $s=w\symbol{94}2^{\left( k_{M}-r-1\right)
} $, so $G=G_{w\symbol{94}2^{\left( k_{M}-r-1\right) }}^{1}\in \mathcal{G}%
_{w}\left( M\right) $.

2$^{\text{o}}$ $G=G_{s}^{0}$, where $s=v\symbol{94}\left( i+1\right) \symbol{%
94}0^{\left( k_{M}-k_{l}-1\right) }$, $i\in \left\{ 0,1\right\} $, $m\leq
l<M $, $v\in \left\{ 0,1,2\right\} ^{k_{l}-1}$, $G_{v}^{i}\in \mathcal{G}%
_{t}\left( l\right) $.\newline
Since $k_{l}\leq k_{M-1}\leq r$ and $w\prec s$, we have $s=w\symbol{94}%
0^{\left( k_{M}-r-1\right) }$, and so $G=G_{w\symbol{94}0^{\left(
k_{M}-r-1\right) }}^{0}\in \mathcal{G}_{w}\left( M\right) $.

3$^{\text{o}}$ $G=G_{s}^{1}$, where $s=v\symbol{94}i\symbol{94}2^{\left(
k_{M}-k_{l}-1\right) }$, $i\in \left\{ 0,1\right\} $, $m\leq l<M$, $v\in
\left\{ 0,1,2\right\} ^{k_{l}-1}$, $G_{v}^{i}\in \mathcal{G}_{t}\left(
l\right) $.\newline
As above we get $s=w\symbol{94}2^{\left( k_{M}-r-1\right) }$, so $G=G_{w%
\symbol{94}2^{\left( k_{M}-r-1\right) }}^{1}\in \mathcal{G}_{w}\left(
M\right) $. This completes the proof of the inclusion $\mathcal{H}\left(
M\right) \subset \mathcal{G}_{w}\left( M\right) $.

Assume now that $n\geq M$ and $\mathcal{H}\left( l\right) \subset \mathcal{G}%
_{w}\left( l\right) $ for $l\in \left\{ M,\ldots ,n\right\} $. Let $G\in 
\mathcal{H}\left( n+1\right) $, that is, $G=G_{s}^{i}\in \mathcal{G}%
_{t}\left( n+1\right) $, where $s\in \left\{ 0,1,2\right\} ^{k_{n+1}-1}$, $%
w\prec s$, $i\in \left\{ 0,1\right\} $. We will prove that $G\in \mathcal{G}%
_{w}\left( n+1\right) $. Let us consider the cases.

1$^{\text{o}}$ $G=G_{s}^{0}$, where $s=t\symbol{94}0^{\left(
k_{n+1}-k-1\right) }$ or $G=G_{s}^{1}$, where $s=t\symbol{94}2^{\left(
k_{n+1}-k-1\right) }$.\newline
In the first case, $s=w\symbol{94}0^{\left( k_{n+1}-r-1\right) }$, so $G=G_{w%
\symbol{94}0^{\left( k_{n+1}-r-1\right) }}^{0}\in \mathcal{G}_{w}\left(
n+1\right) $, and in the second case, $s=w\symbol{94}2^{\left(
k_{n+1}-r-1\right) }$, so $G=G_{w\symbol{94}2^{\left( k_{n+1}-r-1\right)
}}^{1}\in \mathcal{G}_{w}\left( n+1\right) $.

2$^{\text{o}}$ $G=G_{s}^{0}$, where $s=v\symbol{94}\left( i+1\right) \symbol{%
94}0^{\left( k_{n+1}-k_{l}-1\right) }$, $i\in \left\{ 0,1\right\} $, $m\leq
l\leq n$, $v\in \left\{ 0,1,2\right\} ^{k_{l}-1}$, $G_{v}^{i}\in \mathcal{G}%
_{t}\left( l\right) $.\newline
If $k_{l}\leq r$, then $s=w\symbol{94}0^{\left( k_{n+1}-r-1\right) }$,
because $w\prec s$. Hence $G=G_{w\symbol{94}0^{\left( k_{n+1}-r-1\right)
}}^{0}\in \mathcal{G}_{w}\left( n+1\right) $. If $k_{l}>r$, then $w\prec v$,
and therefore $G_{v}^{i}\in \mathcal{H}\left( l\right) $. From the inductive
hypothesis it follows that $G_{v}^{i}\in \mathcal{G}_{w}\left( l\right) $,
and consequently $G=G_{v\symbol{94}\left( i+1\right) \symbol{94}0^{\left(
k_{n+1}-k_{l}-1\right) }}^{0}\in \mathcal{G}_{w}\left( n+1\right) $.

3$^{\text{o}}$ $G=G_{s}^{1}$, where $s=v\symbol{94}i\symbol{94}2^{\left(
k_{n+1}-k_{l}-1\right) }$, $i\in \left\{ 0,1\right\} $, $m\leq l\leq n$, $%
v\in \left\{ 0,1,2\right\} ^{k_{l}-1}$, $G_{v}^{i}\in \mathcal{G}_{t}\left(
l\right) $.\newline
Similarly as in the previous case, we show that $G\in \mathcal{G}_{w}\left(
n+1\right) $, which finishes the proof of (\ref{lem2-0}).

Ad (\ref{lem2-4}). Assume that $k=k_{0}=0$, $t=\emptyset $ and set $\mathcal{%
G}:=\mathcal{G}_{\emptyset }$. Let $n \in \N$, $s\in \left\{ 0,1,2\right\}
^{k_{n}-1}$, $i\in \left\{ 0,1\right\} $ and $G_{s}^{i}\in \mathcal{G}_{t}$.
We first prove that for any $v\in \left\{ 0,1,2\right\} ^{k_{n}-1}$, $v\neq
s $ we have $G_{s}^{i}\cap J_{v}=\emptyset $. Let us consider the cases.

1$^{\text{o}}$ There is $l<n$ such that $s_{j}=v_{j}$ for $j=1,\ldots
,k_{l}-1$ and $s_{k_{l}}\neq v_{k_{l}}$. \newline
Writing $p:=s|\left( k_{l}-1\right) $, we get $G_{s}^{i}\subset J_{s}\subset
J_{s|k_{l}}=J_{p\symbol{94}s_{k_{l}}}$ and $J_{v}\subset J_{v|k_{l}}=J_{p%
\symbol{94}v_{k_{l}}}$. Since $\lambda _{k_{l}}<\frac{1}{3}$, the intervals $%
J_{p\symbol{94}s_{k_{l}}}$ and $J_{p\symbol{94}v_{k_{l}}}$ are disjoint.
Hence $G_{s}^{i}\cap J_{v}=\emptyset $.

2$^{\text{o}}$ There is $r\leq k_{n}-1$ such that $s_{j}=v_{j}$ for $%
j=1,\ldots ,r-1$ and $\left\vert s_{r}-v_{r}\right\vert =2$. \newline
Taking $p:=s|\left( r-1\right) $, we obtain $G_{s}^{i}\subset J_{s}\subset
J_{s|r}=J_{p\symbol{94}s_{r}}$ and $J_{v}\subset J_{v|r}=J_{p\symbol{94}%
v_{r}}$. Since $\lambda _{r}<\frac{1}{2}$, the intervals $J_{p\symbol{94}%
s_{r}}$ and $J_{p\symbol{94}v_{r}}$ are disjoint. Therefore, $G_{s}^{i}\cap
J_{v}=\emptyset .$

3$^{\text{o}}$ There are $l$, $r$ such that $l\leq n$, $k_{l-1}<r<k_{l}$, $%
s_{j}=v_{j}$ for $j=1,\ldots ,r-1$ and $v_{r}=s_{r}+1$.\newline
Put $p:=s|\left( r-1\right) $ and 
\begin{eqnarray*}
\overline{p}\left. :=\right. p\symbol{94}0^{\left( k_{l}-r\right) }\symbol{94%
}1\symbol{94}0^{\left( k_{l+1}-k_{l}-1\right) }\symbol{94}1\symbol{94}\ldots 
\symbol{94}1\symbol{94}0^{\left( k_{n}-k_{n-1}-1\right) } &\in &\left\{
0,1,2\right\} ^{k_{n}-1}, \\
\overline{q}\left. :=\right. p\symbol{94}2^{\left( k_{l}-r\right) }\symbol{94%
}1\symbol{94}2^{\left( k_{l+1}-k_{l}-1\right) }\symbol{94}1\symbol{94}\ldots 
\symbol{94}1\symbol{94}2^{\left( k_{n}-k_{n-1}-1\right) } &\in &\left\{
0,1,2\right\} ^{k_{n}-1}.
\end{eqnarray*}%
From (\ref{lem2-0}) it follows that $G_{s}^{i}\in \mathcal{G}_{p}\left(
n\right) $. Consequently, (\ref{lem2-2}) shows that $r\left(
G_{s}^{i}\right) \leq r\left( G_{\overline{p}}^{0}\right) $ or $l\left(
G_{s}^{i}\right) \geq l\left( G_{\overline{q}}^{1}\right) $.\ Since $s_{r}<2$%
, by the definition of $\mathcal{G}$ we get $s_{r}=\ldots =s_{k_{l}-1}=0$.
Thus,
\begin{equation*}
l\left( G_{s}^{i}\right) <r\left( J_{s}\right) \leq r\left( J_{s|r}\right)
=r\left( J_{p\symbol{94}0}\right) <l\left( J_{p\symbol{94}2}\right) =l\left(
J_{\overline{q}|r}\right) \leq l\left( J_{\overline{q}}\right) <l\left( G_{%
\overline{q}}^{1}\right) ,
\end{equation*}%
and therefore $r\left( G_{s}^{i}\right) \leq r\left( G_{\overline{p}%
}^{0}\right) $. On the other hand, from (\ref{lem2-1}) it follows that%
\begin{eqnarray*}
r\left( G_{\overline{p}}^{0}\right) &=&l\left( J_{p}\right)
+\sum_{j=l}^{n}\left( d_{k_{j}-1}-d_{k_{j}}\right) \leq l\left( J_{p}\right)
+\sum_{j=r+1}^{k_{n}}\left( d_{j-1}-d_{j}\right) <l\left( J_{p\symbol{94}%
s_{r}}\right) +d_{r} \\
&<&l\left( J_{p\symbol{94}s_{r}}\right) +d_{r-1}-d_{r}=l\left( J_{p\symbol{94%
}\left( s_{r}+1\right) }\right) =l\left( J_{v|r}\right) \leq l\left(
J_{v}\right) .
\end{eqnarray*}%
Hence $r\left( G_{s}^{i}\right) <l\left( J_{v}\right) $, and so $%
G_{s}^{i}\cap J_{v}=\emptyset $.

4$^{\text{o}}$ There are $l$, $r$ such that $l\leq n$, $k_{l-1}<r<k_{l}$, $%
s_{j}=v_{j}$ for $j=1,\ldots ,r-1$ and $v_{r}=s_{r}-1$. \newline
The reasoning is analogous to that in the previous case. This finishes the
proof of the first condition in (\ref{lem2-4}).

Assume now that $G_{u}^{j}\in \mathcal{G}_{t}$, where $u\in \left\{
0,1,2\right\} ^{l}$, $j\in \left\{ 0,1\right\} $, and $\left( u,j\right) \neq
\left( s,i\right) $. If $s=u$ and $i\neq j$, then the condition $%
G_{s}^{i}\cap G_{u}^{j}=\emptyset $ is obvious. Suppose that $s\neq u$.
Without loss of generality we can assume that $n<l$. Thus, we have $%
G_{u}^{j}\subset J_{u}\subset J_{u|k_{n}}\subset J_{u|\left( k_{n}-1\right)
} $. If $s=u|\left( k_{n}-1\right) $, then $G_{s}^{i}\cap
J_{u|k_{n}}=G_{s}^{i}\cap J_{s\symbol{94}u_{k_{n}}}=\emptyset $, and
therefore $G_{s}^{i}\cap G_{u}^{j}=\emptyset $. If $s\neq u|\left(
k_{n}-1\right) $, then from the first condition we get $G_{s}^{i}\cap
J_{u|\left( k_{n}-1\right) }=\emptyset $, which gives $G_{s}^{i}\cap
G_{u}^{j}=\emptyset $.
\end{proof}
\end{lemma}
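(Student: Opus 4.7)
The plan is to handle the four parts essentially in the stated order, with Proposition \ref{lem}(\ref{1-2a}) doing most of the computational work and a straightforward induction on $n$ driving parts (\ref{lem2-2}) and (\ref{lem2-0}); the real difficulty is concentrated in part (\ref{lem2-4}).

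For (\ref{lem2-1}) I compute $r(G_{p(n)}^0)=l(J_{p(n)\symbol{94}1})$ by applying Proposition \ref{lem}(\ref{1-2a}) to $p(n)\symbol{94}1$ and $t\symbol{94}0^{(k_n-k)}$: these sequences agree except at the indices $k_m,k_{m+1},\dots,k_n$, where $p(n)\symbol{94}1$ has entry $1$ versus $0$, giving $r(G_{p(n)}^0)=l(J_t)+\sum_{l=m}^n(d_{k_l-1}-d_{k_l})$. Monotonicity is immediate, the symmetric computation yields the analogous formula for $l(G_{q(n)}^1)$, and the strict inequality of the limits follows because $|J_t|=2d_k=2\sum_{r=k}^\infty(d_r-d_{r+1})$ while $\{k_l:l\geq m\}$ is a proper subset of $\{r:r>k\}$ (since $\lambda_n\geq 1/3$ for infinitely many $n$, by hypothesis), so the telescoping sum is strictly larger than $2\sum_{l=m}^\infty(d_{k_l-1}-d_{k_l})$.

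For (\ref{lem2-2}) I induct on $n\geq m$. The base case is trivial because $\mathcal{G}_t(m)=\{G_{p(m)}^0,G_{q(m)}^1\}$. For the inductive step, a gap $G\in\mathcal{G}_t(n+1)$ either lies in $\widetilde{\mathcal{G}_t}(n+1)$ (obvious) or is produced by the extension rule from some $G_s^i\in\mathcal{G}_t(l)$ with $m\leq l\leq n$. In the latter case the inductive hypothesis gives $r(G_s^i)\leq r(G_{p(l)}^0)$ or $l(G_s^i)\geq l(G_{q(l)}^1)$; combined with the monotonicity from (\ref{lem2-1}), a short calculation using $r(G_{s\symbol{94}(i+1)\symbol{94}0^{(k_{n+1}-k_l-1)}}^0)=r(G_s^i)+(d_{k_{n+1}-1}-d_{k_{n+1}})$ (and its symmetric partner) yields exactly $r(G)\leq r(G_{p(n+1)}^0)$ or $l(G)\geq l(G_{q(n+1)}^1)$. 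Part (\ref{lem2-0}) is handled by another induction on $n\geq M$, mirroring the inductive definition of $\mathcal{G}_t(n+1)$: I classify $G\in\mathcal{H}(n+1)$ according to whether it is a $\widetilde{\mathcal{G}_t}$-gap, and in the recursive case I split on $k_l\leq r$ (forcing $s=w\symbol{94}0^{(k_{n+1}-r-1)}$ so that $G\in\widetilde{\mathcal{G}_w}(n+1)$) versus $k_l>r$ (so $w\prec v$ and the inductive hypothesis puts $G_v^i\in\mathcal{G}_w(l)$, whence $G\in\mathcal{G}_w(n+1)$).

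The heart of the lemma is (\ref{lem2-4}), where I expect the main obstacle. The disjointness of $G_s^i$ from $J_v$ is settled by a case analysis on the first index $r$ at which $s$ and $v$ differ. If $|s_r-v_r|=2$, or if $r$ equals some $k_l$ (where $\lambda_{k_l}<1/3$), then already the parent intervals $J_{p\symbol{94}s_r}$ and $J_{p\symbol{94}v_r}$ are disjoint (using Proposition \ref{lem}(\ref{1-3}) or just $\lambda_r<1/2$), and we are done. The hard case is $|s_r-v_r|=1$ with $k_{l-1}<r<k_l$, i.e.\ at a coordinate where overlaps occur. Here I transfer $G_s^i$ into $\mathcal{G}_p(n)$ with $p=s|(r-1)$ via part (\ref{lem2-0}), apply (\ref{lem2-2}) to bound $r(G_s^i)\leq r(G_{\bar p}^0)$ (the alternative being ruled out geometrically, using that $s_r=\dots=s_{k_l-1}=0$ by the construction of $\mathcal{G}$), and finally chain together the explicit formula from (\ref{lem2-1}) with the telescoping estimate $\sum_{j=l}^n(d_{k_j-1}-d_{k_j})\leq \sum_{j=r+1}^{k_n}(d_{j-1}-d_j)<d_r<d_{r-1}-d_r$ to push $r(G_s^i)$ strictly below $l(J_{p\symbol{94}(s_r+1)})\leq l(J_v)$. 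The disjointness of two gaps $G_s^i$ and $G_u^j$ is then reduced to the interval case: assuming $n<l$, either $u|(k_n-1)\neq s$ so the first assertion of (\ref{lem2-4}) already forces $G_s^i\cap J_{u|(k_n-1)}=\emptyset$, or $u|(k_n-1)=s$ and then $G_u^j\subset J_{s\symbol{94}u_{k_n}}$, which misses the gap $G_s^i\subset J_s\setminus(J_{s\symbol{94}0}\cup J_{s\symbol{94}1}\cup J_{s\symbol{94}2})$.
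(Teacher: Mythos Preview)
Your proposal is correct and follows essentially the same route as the paper: Proposition~\ref{lem}(\ref{1-2a}) for the explicit formulas in (\ref{lem2-1}), induction on $n$ tracking the recursive definition of $\mathcal{G}_t(n)$ for (\ref{lem2-2}) and (\ref{lem2-0}), and for (\ref{lem2-4}) a case split on the first index of disagreement, with the overlap case $k_{l-1}<r<k_l$ handled by feeding $G_s^i$ into $\mathcal{G}_p(n)$ via (\ref{lem2-0}), invoking (\ref{lem2-2}), ruling out the ``right'' alternative using $s_r=\dots=s_{k_l-1}=0$, and then telescoping. The reduction of gap--gap disjointness to the gap--interval statement is also identical.

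Two small points worth tightening when you write this out in full. First, in the inductive step for (\ref{lem2-2}) you should make explicit that the two alternatives in the inductive hypothesis lead to \emph{different} halves of the conclusion: when $G=G^0_{s\symbol{94}(i+1)\symbol{94}0^{(\cdot)}}$ and the old gap satisfied $l(G_s^i)\geq l(G_{q(l)}^1)$, you do not bound $r(G)$ but rather observe $l(G)>l(G_s^i)\geq l(G_{q(n+1)}^1)$; your phrase ``and its symmetric partner'' slightly obscures this asymmetry. Second, when you invoke (\ref{lem2-0}) in the hard case of (\ref{lem2-4}) with $w=p=s|(r-1)$, note that the hypothesis $M>m$ of (\ref{lem2-0}) may fail when $l=1$ (then $M=1=m$); however the proof of (\ref{lem2-0}) goes through verbatim for $M=m$ as well, so this is only a cosmetic issue.
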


The next theorem yields information about relationships between the sets $%
C\left( \lambda \right) -C\left( \lambda \right) $ and $\bigcup \mathcal{G}%
_{t}$. It states that, under the same assumptions as in Lemma \ref{lem2}, $%
\text{int}\left( J_{t}\setminus \bigcup \mathcal{G}_{t}\right) \neq
\emptyset $. Moreover, this theorem shows that for $t=\emptyset $ the
inclusion $C\left( \lambda \right) -C\left( \lambda \right) \subset \left[
-1,1\right] \setminus \bigcup \mathcal{G}_{t}$ holds, and it gives the
formula for a measure of the set $\bigcup \mathcal{G}_{t}$. Using these
conditions, in Theorem \ref{Th1} we will prove (under additional
assumptions) that $\text{int}\left( C\left( \lambda \right) -C\left( \lambda
\right) \right) \neq \emptyset $ and, in consequence, $C\left( \lambda
\right) -C\left( \lambda \right) $ is a Cantorval. We will also get the
formula for the measure of the set $C\left( \lambda \right) -C\left( \lambda
\right) $.

\begin{theorem}
\label{tw4}Assume that $\lambda =\left( \lambda _{j}\right) _{j\in \mathbb{N}%
}\in \left( 0,\frac{1}{2}\right) ^{\mathbb{N}}$ is a sequence such that: $%
\lambda _{n}<\frac{1}{3}$\ for infinitely many terms, $\lambda _{n}\geq 
\frac{1}{3}$\ for infinitely many terms, and there exists $k_{0}\in \mathbb{N%
}\cup \left\{ 0\right\} $ such that $\lambda _{k_{0}+1}>\frac{1}{3}$, where the sequence $\left( k_n \right)$ consists of all indices greater than $k_0$, for which $\lambda_{k_n} < \frac{1}{3}$. Let $%
k\geq k_{0}$, $t\in \left\{ 0,1,2\right\} ^{k}$ and $m\in \mathbb{N}$ be
such that $k_{m-1}\leq k<k_{m}$. The following statements hold.

\begin{enumerate}
\item \label{tw4-1}The set $J_{t}\setminus \bigcup \mathcal{G}_{t}$ has
nonempty interior.

\item \label{tw4-2}If $k=k_{0}=0$ and $t=\emptyset $, then $C\left( \lambda
\right) -C\left( \lambda \right) \subset \left[ -1,1\right] \setminus
\bigcup \mathcal{G}_{t}$ and 
\begin{equation*}
\left\vert \bigcup \mathcal{G}_{t}\right\vert =\sum_{n=1}^{\infty }2\cdot
3^{n-1}\left( d_{k_{n}-1}-3d_{k_{n}}\right) .
\end{equation*}
\end{enumerate}

\begin{proof}
Put $J:=\left( \lim\limits_{n\rightarrow \infty }r\left( G_{p\left( n\right)
}^{0}\right) ,\lim\limits_{n\rightarrow \infty }l\left( G_{q\left( n\right)
}^{1}\right) \right) $. From Lemma \ref{lem2} it follows that $J\subset
J_{t} $, $\left\vert J\right\vert >0$, and for any interval $G\in \mathcal{G}%
_{t}$ we have $r\left( G\right) \leq \lim\limits_{n\rightarrow \infty
}r\left( G_{p\left( n\right) }^{0}\right) =l\left( J\right) $ or $l\left(
G\right) \geq \lim\limits_{n\rightarrow \infty }l\left( G_{q\left( n\right)
}^{1}\right) =r\left( J\right) $. Hence $G\cap J=\emptyset $ and, in
consequence, $J\cap \bigcup \mathcal{G}_{t}=\emptyset $, which completes the
proof of (\ref{tw4-1}).

Write $\mathcal{G}:=\mathcal{G}_{\emptyset }$. On the contrary, suppose that
the set $\left( C\left( \lambda \right) -C\left( \lambda \right) \right)
\cap \bigcup \mathcal{G}$ is nonempty, that is $G_{s}^{i}\cap \left( C\left(
\lambda \right) -C\left( \lambda \right) \right) \neq \emptyset $ for some $n\in \mathbb{N}$, $s\in \left\{ 0,1,2\right\} ^{k_{n}-1}$ and $i\in
\left\{ 0,1\right\} $ such that $G_{s}^{i}\in \mathcal{G}$. Hence we obtain $%
G_{s}^{i}\cap \left( C_{k_{n}}\left( \lambda \right) -C_{k_{n}}\left(
\lambda \right) \right) \neq \emptyset $, and consequently there is a
sequence $v\in \left\{ 0,1,2\right\} ^{k_{n}}$ such that $G_{s}^{i}\cap
J_{v}\neq \emptyset $. If $s\prec v$, then $v=s\symbol{94}h$, where $h\in
\left\{ 0,1,2\right\} $, and therefore $G_{s}^{i}\cap J_{v}=G_{s}^{i}\cap
J_{s\symbol{94}h}=\emptyset $, a contradiction. However, if $s\neq v|\left(
k_{n}-1\right) $, then from Lemma \ref{lem2} we get $G_{s}^{i}\cap
J_{v}\subset G_{s}^{i}\cap J_{v|\left( k_{n}-1\right) }=\emptyset $, a
contradiction. This finishes the proof of the inclusion $C\left( \lambda
\right) -C\left( \lambda \right) \subset \left[ -1,1\right] \setminus
\bigcup \mathcal{G}$.

Let $a_{n}$ denote the number of elements of the set $\mathcal{G}\left(
n\right) $. From Lemma \ref{lem2} it follows that $G_{s}^{i}\neq G_{u}^{j}$
for any $G_{s}^{i},G_{u}^{j}\in \mathcal{G}$ such that $\left( s,i\right)
\neq \left( u,j\right) $. Hence $a_{1}=2$ and $a_{n+1}=2+2%
\sum_{i=1}^{n}a_{i} $. It is easy to check that the sequence $a_{n}=2\cdot
3^{n-1}$ is a solution of this recurrence equation. Since the intervals in $%
\mathcal{G}$ are pairwise disjoint and the length of each interval from $%
\mathcal{G}\left( n\right) $ is equal to $d_{k_{n}-1}-3d_{k_{n}}$, we
have that%
\begin{equation*}
\left\vert \bigcup \mathcal{G}\right\vert =\sum_{n=1}^{\infty }\sum_{G\in 
\mathcal{G}\left( n\right) }\left\vert G\right\vert =\sum_{n=1}^{\infty
}2\cdot 3^{n-1}\left( d_{k_{n}-1}-3d_{k_{n}}\right) .
\end{equation*}
\end{proof}
\end{theorem}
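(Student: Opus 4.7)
The plan is to prove (\ref{tw4-1}) by exhibiting an explicit open subinterval of $J_t$ that misses every gap in $\mathcal{G}_t$, and then to prove (\ref{tw4-2}) in two steps: a contradiction argument for the inclusion $C(\lambda)-C(\lambda)\subset[-1,1]\setminus\bigcup\mathcal{G}_{\emptyset}$, followed by a recursive count of $\#\mathcal{G}_{\emptyset}(n)$ that yields the measure formula.

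For (\ref{tw4-1}), I would define
\begin{equation*}
J := \bigl(\lim_{n \to \infty} r(G_{p(n)}^{0}),\, \lim_{n \to \infty} l(G_{q(n)}^{1})\bigr).
\end{equation*}
Lemma \ref{lem2}(\ref{lem2-1}) guarantees that both limits exist, that $J$ is non-degenerate, and that both endpoint sequences lie in $J_t$, so $J\subset J_t$. For an arbitrary $G\in\mathcal{G}_t$, pick $n\geq m$ with $G\in\mathcal{G}_t(n)$; Lemma \ref{lem2}(\ref{lem2-2}) then supplies the dichotomy $r(G)\leq r(G_{p(n)}^{0})\leq l(J)$ or $l(G)\geq l(G_{q(n)}^{1})\geq r(J)$, so $G\cap J=\emptyset$ in either case. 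Hence $\emptyset\neq J\subset J_t\setminus\bigcup\mathcal{G}_t$.

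For the inclusion in (\ref{tw4-2}), writing $\mathcal{G}:=\mathcal{G}_{\emptyset}$, suppose for contradiction that some $G_s^{i}\in\mathcal{G}$ meets $C(\lambda)-C(\lambda)$, where $s\in\{0,1,2\}^{k_n-1}$. By Proposition \ref{lem}(\ref{1-7}) and (\ref{1-5}), $C(\lambda)-C(\lambda)\subset C_{k_n}(\lambda)-C_{k_n}(\lambda)=\bigcup_{v\in\{0,1,2\}^{k_n}}J_v$, so $G_s^{i}\cap J_v\neq\emptyset$ for some $v$. If $s\prec v$, then $v=s\symbol{94}h$ for some $h\in\{0,1,2\}$, and the interval $J_{s\symbol{94}h}$ is disjoint from the gap $G_s^{i}\subset J_s\setminus(J_{s\symbol{94}0}\cup J_{s\symbol{94}1}\cup J_{s\symbol{94}2})$ by Proposition \ref{lem}(\ref{1-3}). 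If $s\not\prec v$, then $v|(k_n-1)\neq s$, so the first part of Lemma \ref{lem2}(\ref{lem2-4}) gives $G_s^{i}\cap J_{v|(k_n-1)}=\emptyset$, whence $G_s^{i}\cap J_v=\emptyset$ as well. Both cases contradict the choice of $v$.

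For the measure formula, the second part of Lemma \ref{lem2}(\ref{lem2-4}) ensures that the elements of $\mathcal{G}$ are pairwise distinct and pairwise disjoint, so counting and summation are unambiguous. Writing $a_n:=\#\mathcal{G}(n)$, the inductive definition gives $a_1=2$ (the two members of $\widetilde{\mathcal{G}}(1)$) and the recurrence $a_{n+1}=2+2\sum_{l=1}^{n}a_l$, which telescopes to $a_{n+1}=3a_n$ and hence $a_n=2\cdot 3^{n-1}$. Each gap in $\mathcal{G}(n)$ has length $d_{k_n-1}-3d_{k_n}$ by Proposition \ref{lem}(\ref{1-3}), so summing yields
\begin{equation*}
\Bigl|\bigcup\mathcal{G}\Bigr|=\sum_{n=1}^{\infty}2\cdot 3^{n-1}(d_{k_n-1}-3d_{k_n}).
\end{equation*}
The main obstacle is the contradiction argument for the inclusion in (\ref{tw4-2}): the technical heart, namely the disjointness statement needed to eliminate the case $s\not\prec v$, has been absorbed into Lemma \ref{lem2}(\ref{lem2-4}), and without that lemma one would have to redo its four-case analysis from scratch.
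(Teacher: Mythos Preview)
Your proof is correct and follows essentially the same approach as the paper's: the same open interval $J$ for (\ref{tw4-1}), the same contradiction argument via Lemma \ref{lem2}(\ref{lem2-4}) for the inclusion in (\ref{tw4-2}), and the same recurrence $a_{n+1}=2+2\sum_{l=1}^{n}a_l$ for the count. Your presentation is slightly more explicit about which parts of Proposition \ref{lem} and Lemma \ref{lem2} are invoked, and you add the observation that the recurrence telescopes to $a_{n+1}=3a_n$, but the substance is identical.
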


The next theorem is the main result of our paper.

\begin{theorem}
\label{Th1}Assume that $\lambda =\left( \lambda _{j}\right) _{j\in \mathbb{N}%
}\in \left( 0,\frac{1}{2}\right) ^{\mathbb{N}}$ is a sequence such that: $%
\lambda _{n}<\frac{1}{3}$\ for infintely many terms, $\lambda _{n}\geq \frac{%
1}{3}$\ for infintely many terms, and there is $k_{0}\in \mathbb{N}\cup
\left\{ 0\right\} $ such that $\lambda _{k_{0}+1}>\frac{1}{3}$. Let $k \geq k_0$, $t \in \{0,1,2\}^k$ and $m \in \N$ be such that $k_{m-1} \leq k < k_m$, where the sequence $\left( k_n \right)$ consists of all indices greater than $k_0$, for which $\lambda_{k_n} < \frac{1}{3}$. Moreover,
assume that there exists a sequence $\left( \delta _{n}\right) _{n\in 
\mathbb{N}}$ such that for any $n\in \mathbb{N}$, the following conditions
hold 
\begin{equation}
\left\{ 
\begin{array}{l}
3d_{r}-d_{r-1}=\delta _{n}\text{ if }k_{n-1}<r<k_{n}, \\ 
4d_{k_{n}}=\delta _{n}+\delta _{n+1}, \\ 
d_{k_{n}-1}-d_{k_{n}}=\delta _{n}-\delta _{n+1}.%
\end{array}%
\right.  \label{07}
\end{equation}%
Then we have:

\begin{enumerate}
\item \label{Th1-1}$J_{t}\setminus \bigcup \mathcal{G}_{t}\subset C\left(
\lambda \right) -C\left( \lambda \right) $.

\item \label{Th1-2}The set $C\left( \lambda \right) -C\left( \lambda \right) 
$ is a Cantorval.

\item \label{Th1-3}If $k=k_{0}=0$ and $t=\emptyset $, then $C\left( \lambda
\right) -C\left( \lambda \right) =J_{t}\setminus \bigcup \mathcal{G}_{t}$
and 
\begin{equation*}
\left\vert C\left( \lambda \right) -C\left( \lambda \right) \right\vert
=2-2\sum_{n=1}^{\infty }3^{n-1}\left( d_{k_{n}-1}-3d_{k_{n}}\right) .
\end{equation*}
\end{enumerate}

\begin{proof}
If $n\in \mathbb{N}$, $s\in \left\{ 0,1,2\right\} ^{k_{n}-1}$, $i\in \left\{
0,1\right\} $, $u\in \left\{ 0,1,2\right\} ^{k_{n}}$, and $c\left(
G_{s}^{i}\right) =c\left( J_{u}\right) $, then we write $G_{s}^{i}\subset
_{\ast }J_{u}$. Of course, $G_{s}^{i}\subset _{\ast }J_{u}$ implies $%
G_{s}^{i}\subset J_{u}$. From Proposition \ref{lem} and (\ref{07}) it
follows that 
\begin{equation*}
\left\vert J_{u}\right\vert -\left\vert G_{s}^{i}\right\vert
=2d_{k_{n}}-\left( d_{k_{n}-1}-3d_{k_{n}}\right) =\left( \delta _{n}-\left(
d_{k_{n}-1}-d_{k_{n}}\right) \right) +\left( 4d_{k_{n}}-\delta _{n}\right)
=2\delta _{n+1}.
\end{equation*}%
Thus, the condition $G_{s}^{i}\subset _{\ast }J_{u}$ is equivalent to the
condition%
\begin{equation*}
l\left( G_{s}^{i}\right) -l\left( J_{u}\right) =\delta _{n+1}\text{\quad
or\quad }r\left( J_{u}\right) -r\left( G_{s}^{i}\right) =\delta _{n+1}.
\end{equation*}%
We first prove (\ref{Th1-1}). The basic idea of the proof is to show (under
additional assumptions) that any gap $G_{s}^{i}$, where $s\in \left\{
0,1,2\right\} ^{k_{n}-1}$, $t\prec s$ and $i\in \left\{ 0,1\right\} $, is
covered by an interval $J_{u}$, where $u\in \left\{ 0,1,2\right\} ^{k_{n}}$
and $t\prec u$, that is, the following condition holds 
\begin{equation}
\exists _{u\in \left\{ 0,1,2\right\} ^{k_{n}}}\left( t\prec u\wedge
G_{s}^{i}\subset _{\ast }J_{u}\right) .  \tag{$\alpha _{n}$}
\end{equation}%
The proof is divided into a few steps. Let $m \in \N$ be such that $k_{m-1} \leq k < k_m$. Fix $n\geq m$.

\textbf{Claim 1. }\emph{If }$s\in \left\{ 0,1,2\right\} ^{k_{n}-1}$\emph{, }$%
t\prec s$\emph{, and }$i\in \left\{ 0,1\right\} $\emph{, then}%
\begin{equation*}
\left( \exists _{l\in \left\{ m-1,\ldots ,n-1\right\} }\ \max \left(
k,k_{l}\right) <N\left( s,i\right) <k_{l+1}\right) \Rightarrow \left( \alpha
_{n}\right) .
\end{equation*}

Let us write $N:=N\left( s,i\right) $ and consider two cases.

1$^{\text{o}}$ $i=0$. Then $N=\max \left\{ j:s_{j}>0\right\} $, $s_{N}\in
\left\{ 1,2\right\} $, and $s=p\symbol{94}s_{N}\symbol{94}0^{\left(
k_{n}-1-N\right) }$, where $p\in \left\{ 0,1,2\right\} ^{N-1}$. Define a
sequence $u\in \left\{ 0,1,2\right\} ^{k_{n}}$ in the following way: 
\begin{equation*}
u_{j}:=\left\{ 
\begin{array}{lll}
s_{j} & \text{if} & j\leq N-1 \\ 
s_{N}-1 & \text{if} & j=N \\ 
1 & \text{if} & j\in \left\{ k_{l+1},\ldots ,k_{n-1}\right\} \\ 
2 & \text{if} & j>N,j\notin \left\{ k_{l+1},\ldots ,k_{n-1}\right\}%
\end{array}%
\right. .
\end{equation*}%
Since $k<N$, we have $t\prec s|\left( N-1\right) \prec u$. Moreover, $%
G_{s}^{0}\subset _{\ast }J_{u}$ because from (\ref{07}) and Proposition \ref%
{lem} it follows that%
\begin{eqnarray*}
l\left( G_{s}^{0}\right) -l\left( J_{u}\right) &=&l\left( J_{s\symbol{94}%
0}\right) +2d_{k_{n}}-l\left( J_{u}\right) \\
&=&2d_{k_{n}}+\left( d_{N-1}-d_{N}\right) -\sum_{r=N+1}^{k_{n}}2\left(
d_{r-1}-d_{r}\right) +\sum_{r=l+1}^{n-1}\left(
d_{k_{r}-1}-d_{k_{r}}\right) \\
&=&2d_{k_{n}}+d_{N-1}-d_{N}-2d_{N}+2d_{k_{n}}+\sum_{r=l+1}^{n-1}\left(
\delta _{r}-\delta _{r+1}\right) \\
&=&4d_{k_{n}}-\delta _{l+1}+\sum_{r=l+1}^{n-1}\left( \delta _{r}-\delta
_{r+1}\right) =4d_{k_{n}}-\delta _{n}=\delta _{n+1}.
\end{eqnarray*}

2$^{\text{o}}$ $i=1$. Then $N=\max \left\{ j:s_{j}<2\right\} $, $s_{N}\in
\left\{ 0,1\right\} $, and $s=p\symbol{94}s_{N}\symbol{94}2^{\left(
k_{n}-1-N\right) }$, where $p\in \left\{ 0,1,2\right\} ^{N-1}$. Let us
define a sequence $u\in \left\{ 0,1,2\right\} ^{k_{n}}$ in the following
way: 
\begin{equation*}
u_{j}:=\left\{ 
\begin{array}{lll}
s_{j} & \text{if} & j\leq N-1 \\ 
s_{N}+1 & \text{if} & j=N \\ 
1 & \text{if} & j\in \left\{ k_{l+1},\ldots ,k_{n-1}\right\} \\ 
0 & \text{if} & j>N,j\notin \left\{ k_{l+1},\ldots ,k_{n-1}\right\}%
\end{array}%
\right. .
\end{equation*}%
Then $t\prec u$ and $G_{s}^{1}\subset _{\ast }J_{u}$ because%
\begin{eqnarray*}
r\left( J_{u}\right) -r\left( G_{s}^{1}\right) &=&r\left( J_{u}\right)
-r\left( J_{s\symbol{94}2}\right) +2d_{k_{n}} \\
&=&2d_{k_{n}}+\left( d_{N-1}-d_{N}\right) -\sum_{r=N+1}^{k_{n}}2\left(
d_{r-1}-d_{r}\right) +\sum_{r=l+1}^{n-1}\left(
d_{k_{r}-1}-d_{k_{r}}\right) \\
&=&2d_{k_{n}}+d_{N-1}-d_{N}-2d_{N}+2d_{k_{n}}+\sum_{r=l+1}^{n-1}\left(
\delta _{r}-\delta _{r+1}\right) \\
&=&4d_{k_{n}}-\delta _{l+1}+\sum_{r=l+1}^{n-1}\left( \delta _{r}-\delta
_{r+1}\right) =4d_{k_{n}}-\delta _{n}=\delta _{n+1}.
\end{eqnarray*}%
This finishes the proof of Claim 1.

\textbf{Claim 2.} \emph{If }$s\in \left\{ 0,1,2\right\} ^{k_{n}-1}$\emph{, }$%
t\prec s$\emph{, }$i\in \left\{ 0,1\right\} $\emph{, and }$N:=N\left(
s,i\right) $\emph{, then}%
\begin{equation*}
\left( \exists _{l\in \left\{ m,\ldots ,n-1\right\} }\ \exists _{v\in
\left\{ 0,1,2\right\} ^{k_{l}}}\ N=k_{l}\wedge t\prec v\wedge G_{s|\left(
N-1\right) }^{s_{N}-1+i}\subset _{\ast }J_{v}\right) \Rightarrow \left(
\alpha _{n}\right) .
\end{equation*}

Let us consider two cases.

1$^{\text{o}}$ $i=0$. Then $N=\max \left\{ j:s_{j}>0\right\} $, $s_{N}\in
\left\{ 1,2\right\} $, and $s=p\symbol{94}s_{N}\symbol{94}0^{\left(
k_{n}-1-N\right) }$, where $p=s|\left( N-1\right) \in \left\{ 0,1,2\right\}
^{N-1}$. Observe that $l\left( J_{s}\right) =l\left( J_{s|N}\right) =r\left(
G_{p}^{s_{N}-1}\right) $. By assumption, $G_{p}^{s_{N}-1}\subset _{\ast
}J_{v}$. Define the sequence $u\in \left\{ 0,1,2\right\} ^{k_{n}}$ in the
following way: 
\begin{equation*}
u_{j}:=\left\{ 
\begin{array}{lll}
v_{j} & \text{if} & j\leq k_{l} \\ 
1 & \text{if} & j\in \left\{ k_{l+1},\ldots ,k_{n-1}\right\} \\ 
2 & \text{if} & j>k_{l},j\notin \left\{ k_{l+1},\ldots ,k_{n-1}\right\}%
\end{array}%
\right. .
\end{equation*}%
Since $k<k_{m}\leq k_{l}=N$, we have that $t\prec v|\left( N-1\right)
\prec u$. Moreover, $G_{s}^{0}\subset _{\ast }J_{u}$ because from (\ref{07}%
) and Proposition \ref{lem} it follows that 
\begin{eqnarray*}
l\left( G_{s}^{0}\right) -l\left( J_{u}\right) &=&\left( l\left(
J_{s}\right) +2d_{k_{n}}\right) -\left( r\left( J_{u}\right)
-2d_{k_{n}}\right) \\
&=&4d_{k_{n}}+\left( r\left( G_{p}^{s_{N}-1}\right) -r\left( J_{v}\right)
\right) +\left( r\left( J_{v}\right) -r\left( J_{u}\right) \right) \\
&=&4d_{k_{n}}-\delta _{l+1}+\sum_{r=l+1}^{n-1}\left( 2-1\right) \left(
d_{k_{r}-1}-d_{k_{r}}\right) \\
&=&4d_{k_{n}}-\delta _{l+1}+\sum_{r=l+1}^{n-1}\left( \delta _{r}-\delta
_{r+1}\right) =4d_{k_{n}}-\delta _{n}=\delta _{n+1}.
\end{eqnarray*}

2$^{\text{o}}$ $i=1$. Then $N=\max \left\{ j:s_{j}<2\right\} $, $s_{N}\in
\left\{ 0,1\right\} $, and $s=p\symbol{94}s_{N}\symbol{94}2^{\left(
k_{n}-1-N\right) }$, where $p=s|\left( N-1\right) \in \left\{ 0,1,2\right\}
^{N-1}$. Observe that $r\left( J_{s}\right) =r\left( J_{s|N}\right) =l\left(
G_{p}^{s_{N}}\right) $. By assumption, $G_{p}^{s_{N}}\subset _{\ast }J_{v}$.
Let us define the sequence $u\in \left\{ 0,1,2\right\} ^{k_{n}}$ in the
following way: 
\begin{equation*}
u_{j}:=\left\{ 
\begin{array}{lll}
v_{j} & \text{if} & j\leq k_{l} \\ 
1 & \text{if} & j\in \left\{ k_{l+1},\ldots ,k_{n-1}\right\} \\ 
0 & \text{if} & j>k_{l},j\notin \left\{ k_{l+1},\ldots ,k_{n-1}\right\}%
\end{array}%
\right. .
\end{equation*}%
Similarly as in case 1$^{\text{o}}$, we get $t\prec u$ and $r\left(
J_{u}\right) -r\left( G_{s}^{1}\right) =\delta _{n+1}$, so $G_{s}^{1}\subset
_{\ast }J_{u}$. This completes the proof of Claim 2.

\textbf{Claim 3. }\emph{If }$n\geq m$\emph{, }$s\in \left\{ 0,1,2\right\}
^{k_{n}-1}$\emph{, }$t\prec s$\emph{, and }$i\in \left\{ 0,1\right\} $\emph{%
, then }%
\begin{equation}
G_{s}^{i}\in \mathcal{G}_{t}\left( n\right) \text{\quad or\quad }\emph{(}%
\alpha _{n}\emph{)}.  \label{08c}
\end{equation}

We prove Claim 3 inductively. Let $n=m$. If $k=k_{m}-1$, then $s=t$ and $%
\mathcal{G}_{t}\left( m\right) =\left\{ G_{t}^{0},G_{t}^{1}\right\} $, so (%
\ref{08c}) holds. If $k<k_{m}-1$ and $G_{s}^{i}\notin \mathcal{G}_{t}\left(
m\right) =\left\{ G_{t\symbol{94}0^{\left( k_{m}-k-1 \right) }}^{0},G_{t\symbol{94}%
2^{\left( k_{m}-k-1 \right) }}^{1}\right\} $, then $N\left( s,i\right) >k$ and from Claim 1
we infer that ($\alpha _{m}$) holds. Hence in this case, condition (\ref{08c}%
) is also satisfied.

Assume now that $n\geq m$ and for any $l\in \left\{ m,\ldots ,n\right\} $,
any $s\in \left\{ 0,1,2\right\} ^{k_{l}-1}$ such that $t\prec s$, and any $%
i\in \left\{ 0,1\right\} $, $G_{s}^{i}\in \mathcal{G}_{t}\left( l\right) $\
or ($\alpha _{l}$) holds. Let $s\in \left\{ 0,1,2\right\} ^{k_{n+1}-1}$, $%
t\prec s$, $i\in \left\{ 0,1\right\} $, and $N:=N\left( s,i\right) $. We can
assume that $i=0$ (for $i=1$, the proof is analogous). If $N\leq k$, then $s=t%
\symbol{94}0^{\left( k_{n+1}-k-1\right) }$, and consequently $G_{s}^{0}\in 
\mathcal{G}_{t}\left( n+1\right) $. Let us assume that $N>k$ and consider
two cases.

1$^{\text{o}}$ There exists $l\in \left\{ m-1,\ldots ,n\right\} $ such that $%
k_{l}<N<k_{l+1}$. Then from Claim 1 it follows that ($\alpha _{n+1}$) is
satisfied, so condition (\ref{08c}) holds for $n+1$.

2$^{\text{o}}$ There exists $l\in \left\{ m,\ldots ,n\right\} $ such that $%
N=k_{l}$. If there is a sequence $v\in \left\{ 0,1,2\right\} ^{k_{l}}$ such
that $t\prec v\ $and $G_{s|\left( N-1\right) }^{s_{N}-1}\subset _{\ast }J_{v}
$, then Claim 2 implies ($\alpha _{n+1}$). In the other case, for the
sequence $s|\left( N-1\right) $ and $i=s_{N}-1$, condition ($\alpha _{l}$)
is not satisfied. Thus, the inductive hypothesis leads to $G_{s|\left(
N-1\right) }^{s_{N}-1}\in \mathcal{G}_{t}\left( l\right) $. If $s_{N}=1$,
 then $s=\left( s|\left( N-1\right) \right) \symbol{94}1\symbol{94}0^{\left(
k_{n+1}-N-1\right) }$ and $G_{s|\left( N-1\right) }^{0}\in \mathcal{G}%
_{t}\left( l\right) $, while if $s_{N}=2$, then $s=\left( s|\left( N-1\right)
\right) \symbol{94}2\symbol{94}0^{\left( k_{n+1}-N-1\right) }$ and $%
G_{s|\left( N-1\right) }^{1}\in \mathcal{G}_{t}\left( l\right) $. In both
cases, $G_{s}^{0}\in \mathcal{G}_{t}\left( n+1\right) $. This finishes the
proof of (\ref{08c}) for $n+1$.

\textbf{Claim 4. }\emph{Let }$x\in J_{t}\setminus \bigcup G_{t}$\emph{. For
any }$n\geq m$\emph{, we have }%
\begin{equation}
\exists _{u\in \left\{ 0,1,2\right\} ^{k_{n}}}\left( t\prec u\wedge x\in
J_{u}\right) .  \label{09}
\end{equation}%
We prove Claim 4 inductively. Let $n=m$. Since $\lambda _{k+1},\ldots
,\lambda _{k_{m}-1}\geq \frac{1}{3}$, Proposition \ref{lem}(\ref{1-6})
implies that there exists a sequence $s\in \left\{ 0,1,2\right\} ^{k_{m}-1}$
such that $t\prec s\ $ and $x\in J_{s}$ (if $k=k_{m}-1$, we put $s=t$).
Hence $x\in J_{s\symbol{94}0}\cup J_{s\symbol{94}1}\cup J_{s\symbol{94}2}$
or $x\in G_{s}^{0}\cup G_{s}^{1}$. In the first case, condition (\ref{09})
is satisfied for $u=s\symbol{94}j$. Let us assume that $x\in G_{s}^{0}\cup
G_{s}^{1}$. We may assume that $x\in G_{s}^{0}$ (if $x\in G_{s}^{1}$, the
proof is analogous). Since $x\notin \bigcup \mathcal{G}_{t}$, $%
G_{s}^{0}\notin \mathcal{G}_{t}\left( m\right) $. Using Claim 3 we deduce
that there is a sequence $u\in \left\{ 0,1,2\right\} ^{k_{n}}$ such that $%
t\prec u\ $and $G_{s}^{i}\subset _{\ast }J_{u}$. This completes the proof of
(\ref{09}) for $n=m$.

Assume now that $n\geq m$ and condition (\ref{09}) holds for $n$, i.e. there
is a sequence $v\in \left\{ 0,1,2\right\} ^{k_{n}}$ such that $t\prec v\ $
and $x\in J_{v}$. Since $\lambda _{k_{n}+1},\ldots ,\lambda _{k_{n+1}-1}\geq 
\frac{1}{3}$, Proposition \ref{lem}(\ref{1-6}) shows that there is a
sequence $s\in \left\{ 0,1,2\right\} ^{k_{n+1}-1}$ such that $t\prec s\ $
and $x\in J_{s}$. From Claim 3 and ($\alpha _{n+1}$) we infer the existence
of a sequence $u\in \left\{ 0,1,2\right\} ^{k_{n+1}}$ such that $t\prec u\ $
and $x\in J_{u}$, which finishes the proof of (\ref{09})\ for $n+1$.

From Claim 4 it follows that $J_{t}\setminus \bigcup \mathcal{G}_{t}\subset
\bigcup_{u\in \left\{ 0,1,2\right\} ^{k_{n}},t\prec u}J_{u}\subset
C_{k_{n}}\left( \lambda \right) -C_{k_{n}}\left( \lambda \right) $ for $%
n\geq m$, and consequently 
\begin{equation*}
J_{t}\setminus \bigcup \mathcal{G}_{t}\subset \bigcap_{n=m}^{\infty }\left(
C_{k_{n}}\left( \lambda \right) -C_{k_{n}}\left( \lambda \right) \right)
=C\left( \lambda \right) -C\left( \lambda \right) ,
\end{equation*}%
so (\ref{Th1-1}) holds. From Theorem \ref{tw4} we deduce that the set $%
C\left( \lambda \right) -C\left( \lambda \right) $ has nonempty interior
and by Theorems \ref{AI} and \ref{tw1} it is a Cantorval. If $t=\emptyset $,
then Theorem \ref{tw4} implies $C\left( \lambda \right) -C\left( \lambda
\right) \subset \left[ -1,1\right] \setminus \bigcup \mathcal{G}_{t}$.\ Thus, 
$C\left( \lambda \right) -C\left( \lambda \right) =\left[ -1,1\right]
\setminus \bigcup \mathcal{G}_{t}$. Using once again Theorem \ref{tw4}, we get 
\begin{equation*}
\left\vert C\left( \lambda \right) -C\left( \lambda \right) \right\vert
=2-\left\vert \bigcup \mathcal{G}_{t}\right\vert =2-2\sum_{n=1}^{\infty
}3^{n-1}\left( d_{k_{n}-1}-3d_{k_{n}}\right) .
\end{equation*}
\end{proof}
\end{theorem}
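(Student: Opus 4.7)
The plan is to prove (\ref{Th1-1}) by showing that every $x \in J_t \setminus \bigcup \mathcal{G}_t$ lies in $C_{k_n}(\lambda) - C_{k_n}(\lambda)$ for every $n \geq m$; Proposition \ref{lem}(\ref{1-6}) then gives $x \in C_r(\lambda) - C_r(\lambda)$ for all $r \geq k_m$, and (\ref{1-7}) upgrades this to $x \in C(\lambda) - C(\lambda)$. Once (\ref{Th1-1}) is in hand, part (\ref{Th1-2}) is immediate: Theorem \ref{tw4}(\ref{tw4-1}) shows that the difference set has nonempty interior, Theorem \ref{tw1}(\ref{tw1-2}) excludes a finite union of intervals (since $\lambda_n < 1/3$ infinitely often), and Theorem \ref{AI} forces a Cantorval. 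For (\ref{Th1-3}), combining (\ref{Th1-1}) with Theorem \ref{tw4}(\ref{tw4-2}) yields the equality $C(\lambda) - C(\lambda) = [-1,1] \setminus \bigcup \mathcal{G}_\emptyset$, from which the measure formula follows by subtracting the series of Theorem \ref{tw4}(\ref{tw4-2}) from $2$.

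The heart of the proof is a \emph{covering alternative}: for every $n \geq m$ and every admissible gap $G_s^i$ with $t \prec s$ and $s \in \{0,1,2\}^{k_n - 1}$, either $G_s^i \in \mathcal{G}_t(n)$, or there exists $u \in \{0,1,2\}^{k_n}$ with $t \prec u$ and $c(J_u) = c(G_s^i)$ (write this as $G_s^i \subset_\ast J_u$). Using Proposition \ref{lem}(\ref{1-1}) and (\ref{1-3}) together with the hypothesis (\ref{07}), the length mismatch equals $|J_u| - |G_s^i| = 2d_{k_n} - (d_{k_n-1} - 3d_{k_n}) = 2\delta_{n+1}$, so the relation $G_s^i \subset_\ast J_u$ reduces to the endpoint identity $l(G_s^i) - l(J_u) = \delta_{n+1}$, or its mirror image for $i = 1$.

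I would establish the alternative by induction on $n$, splitting according to $N := N(s,i)$. When $N$ lies strictly between two consecutive drop indices $k_l < N < k_{l+1}$ (including the boundary case $k < N < k_m$ with $l = m-1$), I build $u$ explicitly, copying $s$ on $[1, N-1]$, setting $u_N := s_N - 1$ (resp.\ $s_N + 1$ for $i = 1$), placing $1$ at each position $k_{l+1}, \dots, k_{n-1}$ and filling the rest with $2$'s (resp.\ $0$'s). When $N = k_l$ coincides with a drop index, I apply the inductive hypothesis to the parent gap $G_{s|(N-1)}^{s_N - 1 + i}$: if that gap already lies in $\mathcal{G}_t(l)$, then the recursive definition of $\mathcal{G}_t(n)$ places $G_s^i$ in $\mathcal{G}_t(n)$; otherwise it is $\subset_\ast$-covered by some $J_v$ of length $k_l$, which I extend to the desired $u$ by an analogous tail. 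In both cases the endpoint identity follows from a telescoping computation in which the three clauses of (\ref{07}) conspire: the blocks between consecutive $k_l$'s each contribute $3d_r - d_{r-1} = \delta_{l+1}$, the drops contribute $d_{k_r - 1} - d_{k_r} = \delta_r - \delta_{r+1}$, and $4d_{k_n} = \delta_n + \delta_{n+1}$ closes the sum to exactly $\delta_{n+1}$.

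With the covering alternative established, (\ref{Th1-1}) follows easily. For $x \in J_t \setminus \bigcup \mathcal{G}_t$ and $n \geq m$, Proposition \ref{lem}(\ref{1-6}) (using $\lambda_j \geq 1/3$ on $\{k_{n-1}+1, \dots, k_n - 1\}$) provides $s \in \{0,1,2\}^{k_n - 1}$ with $t \prec s$ and $x \in J_s$; then either $x$ lies in $J_{s \symbol{94} 0} \cup J_{s \symbol{94} 1} \cup J_{s \symbol{94} 2}$, in which case we are done, or $x \in G_s^i$ for some $i$, and since $x \notin \bigcup \mathcal{G}_t$ the alternative forces $G_s^i \subset_\ast J_u$ for some $u \in \{0,1,2\}^{k_n}$ with $t \prec u$, whence $x \in J_u$. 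I expect the main obstacle to be the combinatorial bookkeeping in the inductive step, particularly the subcase $N = k_l$: one must check that the dichotomy between ``$G_s^i$ enters $\mathcal{G}_t(n)$'' and ``$G_s^i$ is newly covered at level $k_n$'' is consistent with the precise recursive definition of $\mathcal{G}_t(n)$, and that the telescoping identity genuinely collapses to $\delta_{n+1}$ after the shifts of level and prefix; the arithmetic hypothesis (\ref{07}) is tuned exactly to make this happen.
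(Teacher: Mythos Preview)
Your proposal is correct and follows essentially the same route as the paper: you introduce the same centred-covering relation $G_s^i\subset_\ast J_u$, derive the same length identity $|J_u|-|G_s^i|=2\delta_{n+1}$ from (\ref{07}), prove the same covering alternative by induction on $n$ with the same case split on whether $N(s,i)$ falls strictly between consecutive $k_l$'s (explicit construction of $u$) or equals some $k_l$ (recursion via the parent gap and the definition of $\mathcal{G}_t(n)$), and then deduce (\ref{Th1-1})--(\ref{Th1-3}) exactly as the paper does via Theorems \ref{tw4}, \ref{AI}, and \ref{tw1}. The only cosmetic difference is that the paper separates your ``between'' and ``at $k_l$'' cases into two preliminary claims before running the induction, whereas you fold them directly into the inductive step.
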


\begin{remark}
\label{r1}If a sequence $\left( \delta _{n}\right) $ satisfies condition (%
\ref{07}), then 
\begin{equation*}
\delta _{n}=3d_{k_{0}+1}-d_{k_{0}}-\sum_{r=1}^{n-1}\left(
d_{k_{r}-1}-d_{k_{r}}\right) =\frac{1}{2}\left(
3d_{k_{n}}+d_{k_{n}-1}\right) .
\end{equation*}
\end{remark}

The next proposition shows that the assumptions of Theorem \ref{Th1} can be
written as a system of infinitely many equations. In the next section we
will use such a version of condition (\ref{07}) to examine achievement sets
of some fast convergent series.

\begin{proposition}
\label{Pr1}Assume that $\lambda =\left( \lambda _{j}\right) _{j\in \mathbb{N}%
}\in \left( 0,\frac{1}{2}\right) ^{\mathbb{N}}$ is a sequence such that $%
\lambda _{n}<\frac{1}{3}$\ for infinitely many terms and there is $k_{0}\in 
\mathbb{N}\cup \left\{ 0\right\} $ such that $\lambda _{k_{0}+1}>\frac{1}{3}$%
. Then the following conditions are equivalent.

\begin{enumerate}
\item There is a sequence $\left( \delta _{n}\right) _{n\in \mathbb{N}}$
satisfying condition (\ref{07}).

\item For any $r\in \mathbb{N}$, we have: 
\begin{equation*}
\begin{array}{lll}
3d_{r+1}=4d_{r}-d_{r-1} & \text{if} & \lambda _{r},\lambda _{r+1}\geq \frac{1%
}{3}\text{ or }\lambda _{r},\lambda _{r+1}<\frac{1}{3}, \\ 
3d_{r+1}=5d_{r}-2d_{r-1} & \text{if} & \lambda _{r}\geq \frac{1}{3}\ \text{%
and }\lambda _{r+1}<\frac{1}{3}, \\ 
6d_{r+1}=7d_{r}-d_{r-1} & \text{if} & \lambda _{r}<\frac{1}{3}\text{ and }%
\lambda _{r+1}\geq \frac{1}{3}.%
\end{array}%
\end{equation*}

\item \label{Pr1-3}For any $r\in \mathbb{N}$, we have:%
\begin{equation*}
\begin{array}{lll}
3\lambda _{r}\lambda _{r+1}=4\lambda _{r}-1 & \text{if} & \lambda
_{r},\lambda _{r+1}\geq \frac{1}{3}\text{ or }\lambda _{r},\lambda _{r+1}<%
\frac{1}{3}, \\ 
3\lambda _{r}\lambda _{r+1}=5\lambda _{r}-2 & \text{if} & \lambda _{r}\geq 
\frac{1}{3}\ \text{and }\lambda _{r+1}<\frac{1}{3}, \\ 
6\lambda _{r}\lambda _{r+1}=7\lambda _{r}-1 & \text{if} & \lambda _{r}<\frac{%
1}{3}\text{ and }\lambda _{r+1}\geq \frac{1}{3}.%
\end{array}%
\end{equation*}
\end{enumerate}

\begin{proof}
The equivalence of conditions (2) and (3) is obvious.

(1)$\Rightarrow $(2). Let us consider four cases.

1$^{\text{o}}$ $\lambda _{r},\lambda _{r+1}\geq \frac{1}{3}$. Then $%
3d_{r}-d_{r-1}=3d_{r+1}-d_{r}$, so $3d_{r+1}=4d_{r}-d_{r-1}$.

2$^{\text{o}}$ $\lambda _{r},\lambda _{r+1}<\frac{1}{3}$.\ Then $r=k_{n}$
and\ $r+1=k_{n+1}$ for some $n\in \mathbb{N}$. Hence%
\begin{equation*}
\left\{ 
\begin{array}{l}
-4d_{r}=-4d_{k_{n}}=-\delta _{n}-\delta _{n+1}, \\ 
d_{r-1}-d_{r}=d_{k_{n}-1}-d_{k_{n}}=\delta _{n}-\delta _{n+1}, \\ 
4d_{r+1}=4d_{k_{n+1}}=\delta _{n+1}+\delta _{n+2}, \\ 
d_{r}-d_{r+1}=d_{k_{n+1}-1}-d_{k_{n+1}}=\delta _{n+1}-\delta _{n+2}.%
\end{array}%
\right.
\end{equation*}%
Adding the above equations, we get $3d_{r+1}-4d_{r}+d_{r-1}=0$.

3$^{\text{o }}\lambda _{r}\geq \frac{1}{3}\ $and $\lambda _{r+1}<\frac{1}{3}$%
. Then $r=k_{n}-1$ for some $n\in \mathbb{N}$. Hence 
\begin{equation*}
\left\{ 
\begin{array}{l}
4d_{r+1}=4d_{k_{n}}=\delta _{n}+\delta _{n+1}, \\ 
d_{r}-d_{r+1}=d_{k_{n}-1}-d_{k_{n}}=\delta _{n}-\delta _{n+1}, \\ 
-2\left( 3d_{r}-d_{r-1}\right) =-2\delta _{n}.%
\end{array}%
\right.
\end{equation*}%
Adding the above equations, we obtain $3d_{r+1}-5d_{r}+2d_{r-1}=0$.

4$^{\text{o}}$ $\lambda _{r}<\frac{1}{3}$ and $\lambda _{r+1}\geq \frac{1}{3}
$. Then $r=k_{n}$ for some $n\in \mathbb{N}$. Hence 
\begin{equation*}
\left\{ 
\begin{array}{l}
-4d_{r}=-4d_{k_{n}}=-\delta _{n}-\delta _{n+1}, \\ 
d_{r-1}-d_{r}=d_{k_{n}-1}-d_{k_{n}}=\delta _{n}-\delta _{n+1}, \\ 
2\left( 3d_{r+1}-d_{r}\right) =2\delta _{n+1}.%
\end{array}%
\right.
\end{equation*}%
Adding the above equations, we get $6d_{r+1}-7d_{r}+d_{r-1}=0$.

(2)$\Rightarrow $(1).\textbf{\ }Set $\delta _{n}:=\frac{1}{2}\left(
3d_{k_{n}}+d_{k_{n}-1}\right) $. We will show that this sequence satisfies
condition (\ref{07}).

Assume that $k_{n-1}<r<k_{n}$. If $r+1<k_{n}$, then $\lambda _{r},\lambda
_{r+1}\geq \frac{1}{3}$. Therefore, $3d_{r+1}=4d_{r}-d_{r-1}$, and
consequently $3d_{r+1}-d_{r}=3d_{r}-d_{r-1}$. Hence 
\begin{equation*}
3d_{k_{n-1}+1}-d_{k_{n-1}}=\ldots =3d_{k_{n}-1}-d_{k_{n}-2}.
\end{equation*}%
On the other hand, if $r+1=k_{n}$, then $\lambda _{k_{n}-1}\geq \frac{1}{3}$
and $\lambda _{k_{n}}<\frac{1}{3}.$ Therefore, $%
3d_{k_{n}}=5d_{k_{n}-1}-2d_{k_{n}-2}$, and consequently 
\begin{equation*}
3d_{k_{n}-1}-d_{k_{n}-2}=\frac{1}{2}\left( 6d_{k_{n}-1}-2d_{k_{n}-2}\right) =%
\frac{1}{2}\left( 3d_{k_{n}}+d_{k_{n}-1}\right) =\delta _{n},
\end{equation*}%
which completes the proof of the first equality in (\ref{07}).

We now show that the second equality holds. If $k_{n+1}-k_{n}=1$, then $%
\lambda _{k_{n}},\lambda _{k_{n}+1}<\frac{1}{3}$, and consequently $%
3d_{k_{n}+1}=4d_{k_{n}}-d_{k_{n}-1}$. Hence%
\begin{eqnarray*}
4d_{k_{n}}-\delta _{n} &=&4d_{k_{n}}-\frac{1}{2}\left(
3d_{k_{n}}+d_{k_{n}-1}\right) =4d_{k_{n}}-\frac{1}{2}\left(
3d_{k_{n}}+4d_{k_{n}}-3d_{k_{n}+1}\right) \\
&=&\frac{1}{2}\left( 3d_{k_{n}+1}+d_{k_{n}}\right) =\frac{1}{2}\left(
3d_{k_{n+1}}-d_{k_{n+1}-1}\right) =\delta _{n+1}.
\end{eqnarray*}%
On the other hand, if $k_{n+1}-k_{n}>1$, then $\lambda _{k_{n}}<\frac{1}{3}$
and $\lambda _{k_{n}+1}\geq \frac{1}{3}$, and consequently $%
6d_{k_{n}+1}=7d_{k_{n}}-d_{k_{n}-1}$. Using the first equality, we get 
\begin{eqnarray*}
4d_{k_{n}}-\delta _{n} &=&4d_{k_{n}}-\frac{1}{2}\left(
3d_{k_{n}}+d_{k_{n}-1}\right) =4d_{k_{n}}-\frac{1}{2}\left(
3d_{k_{n}}+7d_{k_{n}}-6d_{k_{n}+1}\right) \\
&=&3d_{k_{n}+1}-d_{k_{n}}=\delta _{n+1},
\end{eqnarray*}%
which finishes the proof of the second equality in (\ref{07}). The third
equality results from the second one because 
\begin{equation*}
\delta _{n}-\delta _{n+1}=2\delta _{n}-\left( \delta _{n}+\delta
_{n+1}\right) =3d_{k_{n}}+d_{k_{n}-1}-4d_{k_{n}}=d_{k_{n}-1}-d_{k_{n}}.
\end{equation*}
\end{proof}
\end{proposition}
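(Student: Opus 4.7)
The plan is to dispatch the equivalence in three steps: observe that (2) $\Leftrightarrow$ (3) is a cosmetic rewriting, derive (2) from (1) by a direct case analysis, and recover (1) from (2) by guessing $\delta_n$ from Remark \ref{r1} and verifying the three lines of (\ref{07}) one at a time. The equivalence of (2) and (3) is immediate since $d_r=\lambda_r d_{r-1}$ and $d_{r+1}=\lambda_r\lambda_{r+1}d_{r-1}$; substituting and dividing by $d_{r-1}>0$ turns each relation in (2) into the corresponding one in (3), while the four cases on $\lambda_r,\lambda_{r+1}$ relative to $\frac{1}{3}$ are the same.

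For (1) $\Rightarrow$ (2), assume $(\delta_n)$ satisfies (\ref{07}) and split into four cases. If $\lambda_r,\lambda_{r+1}\geq \frac{1}{3}$, then both $r$ and $r+1$ lie strictly between two consecutive $k_{n-1}<k_n$, and the first line of (\ref{07}) gives $3d_r-d_{r-1}=\delta_n=3d_{r+1}-d_r$, whence $3d_{r+1}=4d_r-d_{r-1}$. If $\lambda_r,\lambda_{r+1}<\frac{1}{3}$, then $r=k_n$, $r+1=k_{n+1}$, and adding the four equations $-4d_{k_n}=-\delta_n-\delta_{n+1}$, $d_{k_n-1}-d_{k_n}=\delta_n-\delta_{n+1}$, $4d_{k_{n+1}}=\delta_{n+1}+\delta_{n+2}$, $d_{k_{n+1}-1}-d_{k_{n+1}}=\delta_{n+1}-\delta_{n+2}$ eliminates every $\delta_\bullet$ and yields $3d_{r+1}-4d_r+d_{r-1}=0$. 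The two mixed cases reduce to the corresponding identities by a similar combination of lines 1--3 of (\ref{07}), choosing indices so that $r=k_n-1$ (respectively $r=k_n$).

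For (2) $\Rightarrow$ (1), following Remark \ref{r1} I set $\delta_n:=\frac{1}{2}(3d_{k_n}+d_{k_n-1})$ and check each line of (\ref{07}). For the first line, within $k_{n-1}<r<k_n$ we have $\lambda_r,\lambda_{r+1}\geq\frac{1}{3}$, so the first case of (2) makes $3d_r-d_{r-1}$ constant on this range; at $r=k_n-1$ the second case of (2) applies and after a short manipulation yields $3d_{k_n-1}-d_{k_n-2}=\frac{1}{2}(3d_{k_n}+d_{k_n-1})=\delta_n$. For the second line I split into $k_{n+1}-k_n=1$, governed by the first case of (2) applied with $\lambda_{k_n},\lambda_{k_n+1}<\frac{1}{3}$, and $k_{n+1}-k_n>1$, governed by the fourth case with $\lambda_{k_n}<\frac{1}{3}\leq \lambda_{k_n+1}$; in both subcases a brief algebraic rearrangement gives $4d_{k_n}-\delta_n=\delta_{n+1}$. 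The third line is then automatic, since $\delta_n-\delta_{n+1}=2\delta_n-(\delta_n+\delta_{n+1})=3d_{k_n}+d_{k_n-1}-4d_{k_n}=d_{k_n-1}-d_{k_n}$.

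The conceptual content is small; the only obstacle is keeping the bookkeeping honest. In particular, in (2) $\Rightarrow$ (1) the verification of the second line of (\ref{07}) genuinely depends on whether $k_{n+1}=k_n+1$ or $k_{n+1}>k_n+1$, because the relevant case of (2) differs between the two situations. Once this dichotomy is made explicit, everything else is routine linear algebra in the variables $d_{r-1},d_r,d_{r+1}$.
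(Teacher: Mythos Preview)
Your proof is correct and follows essentially the same route as the paper: the same dismissal of (2)$\Leftrightarrow$(3), the same four-case additive elimination of $\delta$'s for (1)$\Rightarrow$(2), and the same choice $\delta_n=\tfrac12(3d_{k_n}+d_{k_n-1})$ from Remark~\ref{r1} for (2)$\Rightarrow$(1), verified via the same dichotomy $k_{n+1}-k_n=1$ versus $k_{n+1}-k_n>1$. One cosmetic slip: in the first line of (\ref{07}) you write ``within $k_{n-1}<r<k_n$ we have $\lambda_r,\lambda_{r+1}\ge\tfrac13$,'' but this fails at $r=k_n-1$; since you immediately treat that endpoint separately via the second case of (2), the argument is unaffected.
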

It may seem unnatural to describe covering the gaps via equalities as in Theorem \ref{Th1}. Our initial idea was to describe it by inequalities. However, it turned out that the system of received inequalities is equivalent to condition (\ref{07}). It suggests that sequences satisfying the assumptions of Theorem \ref{Th1} are quite exceptional. In the next section we will argue why the family of such sequences is interesting.
\section{The sets os subsums of series}

We will use the results obtained in the previous section to examine the sets
of subsums of series. Let $x=\left( x_{j}\right) _{j\in \mathbb{N}}$ be
a nonincreasing sequence of positive numbers such that the series $%
\sum_{j=1}^{\infty }x_{j}$ is convergent. The set%
\begin{equation*}
E\left( x\right) :=\left\{ \sum_{j\in A}x_{j}:A\subset \mathbb{N}\right\} 
\end{equation*}%
of all subsums of $\sum_{j=1}^{\infty }x_{j}$ is called \emph{the
achievement set} of $x$. The sum and the remainders of a series we denote by 
$S:=\sum_{j=1}^{\infty }x_{j}$ and $r_{n}:=\sum_{j=n+1}^{\infty }x_{j}$.
Obviously, $r_{0}=S$. If $x_{n}>r_{n}$ for $n\in \mathbb{N}$, then the series
is called \emph{fast convergent}.

There is a very close relationship between central Cantor sets and the
achievement sets of fast convergent series. This is shown in the following proposition.

\begin{proposition}[{\protect\cite[p. 27]{FF}}]
\label{Pr2}The following conditions hold.

\begin{enumerate}
\item If $\left( \lambda _{j}\right) _{j\in \mathbb{N}}\in \left( 0,\frac{1}{%
2}\right) ^{\mathbb{N}}$, then the series $\sum_{j=1}^{\infty }x_{j}$ given
by the formula%
\begin{equation}
x_{1}=1-\lambda _{1}\quad \text{and}\quad x_{j}=\lambda _{1}\cdot \ldots
\cdot \lambda _{j-1}\cdot \left( 1-\lambda _{j}\right) \text{ for }j>1,
\label{3-1}
\end{equation}%
is fast convergent, $S=1$ and $C\left( \lambda \right) =E\left( x\right) $.

\item If a series $\sum_{j=1}^{\infty }x_{j}$ is fast convergent and $%
\lambda _{j}=\frac{r_{j}}{r_{j-1}}$ for $j\in \mathbb{N}$, then $\left(
\lambda _{j}\right) _{j\in \mathbb{N}}\in \left( 0,\frac{1}{2}\right) ^{%
\mathbb{N}}$ and $E\left( x\right) =S\cdot C\left( \lambda \right) $.
\end{enumerate}
\end{proposition}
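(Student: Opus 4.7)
The strategy is to relate the endpoints of the Cantor intervals $I_{t_1,\dots,t_n}$ directly to partial subsums via a telescoping identity, and then to reduce part~(2) to part~(1) by rescaling.

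For part~(1), I first verify $S=1$, identify the tails, and check fast convergence. The partial sum telescopes:
\begin{equation*}
\sum_{j=1}^{n} x_j = (1-\lambda_1) + \lambda_1(1-\lambda_2) + \cdots + \lambda_1\cdots\lambda_{n-1}(1-\lambda_n) = 1 - \lambda_1\cdots\lambda_n = 1 - d_n,
\end{equation*}
and $\lambda_j<\tfrac12$ forces $d_n\to 0$, so $S=1$ and $r_n = d_n$. Fast convergence $x_n>r_n$ then reduces to $d_{n-1}(1-\lambda_n) > d_{n-1}\lambda_n$, i.e.\ $\lambda_n<\tfrac12$, which holds by hypothesis. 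Next I prove by induction on $n$ that
\begin{equation*}
I_{t_1,\dots,t_n} = \Bigl[\,\textstyle\sum_{j\le n,\,t_j=1} x_j,\;\sum_{j\le n,\,t_j=1} x_j + d_n\,\Bigr],
\end{equation*}
using that by the central construction $l(I_{t\symbol{94}0}) = l(I_t)$, while $l(I_{t\symbol{94}1}) = r(I_t) - d_{n+1} = l(I_t) + d_n - d_{n+1} = l(I_t) + d_n(1-\lambda_{n+1}) = l(I_t) + x_{n+1}$. Once this identification is in place, any infinite subsum $\sum_{j\in A} x_j$ is the limit of left endpoints of the nested intervals $I_{t|n}$ (where $t_j=1$ iff $j\in A$), hence lies in $\bigcap_{n\in\N} C_n(\lambda) = C(\lambda)$; conversely, any $y\in C(\lambda)$ determines a descending chain $I_{t^{(n)}}$ of intervals whose lengths $d_n\to 0$, and $y$ equals the limit of the corresponding partial subsums. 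Thus $E(x) = C(\lambda)$.

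Part~(2) follows by rescaling. Fast convergence is equivalent to $r_{j-1}>2r_j$, so $\lambda_j := r_j/r_{j-1}\in(0,\tfrac12)$. Setting $\tilde x_j := x_j/S$ and using $x_j = r_{j-1}-r_j$ together with $r_{j-1}/r_0 = \lambda_1\cdots\lambda_{j-1}$, one checks directly that $\tilde x_j$ is precisely the canonical sequence (\ref{3-1}) attached to $(\lambda_j)$. Part~(1) then gives $C(\lambda) = E(\tilde x) = E(x)/S$, i.e.\ $E(x) = S\cdot C(\lambda)$. The only real obstacle in the whole argument is the inductive endpoint identification, which is routine but requires careful tracking of the three relevant lengths $d_n$, $d_{n+1}$, and $d_n(1-\lambda_{n+1})$; everything else is a telescoping computation, an algebraic rearrangement of the fast-convergence inequality, or a standard limit argument based on $r_n\to 0$.
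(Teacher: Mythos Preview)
Your argument is correct. Note, however, that the paper does not supply its own proof of this proposition: it is quoted from \cite[p.~27]{FF} and stated without proof, so there is nothing to compare against. Your approach is exactly the natural one---the telescoping identity $\sum_{j\le n} x_j = 1 - d_n$ gives $S=1$ and $r_n=d_n$ immediately, the inductive computation $l(I_{t\symbol{94}1}) = l(I_t) + d_n(1-\lambda_{n+1}) = l(I_t) + x_{n+1}$ pins down the left endpoints of the construction intervals as exactly the finite subsums, and the rescaling in part~(2) via $\tilde x_j = x_j/S = (r_{j-1}-r_j)/r_0 = \lambda_1\cdots\lambda_{j-1}(1-\lambda_j)$ reduces everything to part~(1). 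One small point you leave implicit: fast convergence $x_n>r_n\ge x_{n+1}$ automatically forces the sequence to be nonincreasing, which is part of the standing hypothesis under which the paper defines achievement sets; you might mention this explicitly.
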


We will examine the difference set $E\left( x\right) -E\left( x\right) $ for
some convergent series with positive terms. Observe that $\left( E\left(
x\right) -E\left( x\right) \right) +S=E\left( x\right) +E\left( x\right)
=E\left( x_{1},x_{1},x_{2},x_{2},\ldots \right) $. Thus, the properties of
the difference $E\left( x\right) -E\left( x\right) $ are the same as the
properties of the sum $E\left( x\right) +E\left( x\right) $.

We first check which series correspond to sequences satisfying the
assumptions of Theorem \ref{Th1}. It turns out that they have a very simple
form.

\begin{proposition}
Assume that a sequence $\lambda \in \left( 0,\frac{1}{2}\right) ^{\mathbb{N}%
} $ satisfies the assumptions of Theorem \ref{Th1}, that is $\lambda _{n}<%
\frac{1}{3}$\ for infinitely many terms, $\lambda _{n}\geq \frac{1}{3}$ for
infinitely many terms, $\lambda _{k_{0}+1}>\frac{1}{3}$ for some $k_{0}\in 
\mathbb{N}\cup \left\{ 0\right\} $, $\left( k_{n}\right) _{n\in \mathbb{N}}$
is an increasing sequence of natural numbers such that $\left( \lambda
_{k_{n}}\right) _{n\in \mathbb{N}}$ is a subsequence of the sequence $\left(
\lambda _{j}\right) _{j>k_{0}}$ consisting of all terms which are less than $%
\frac{1}{3}$, and there is a sequence $\left( \delta _{n}\right) $
satisfying (\ref{07}). If the sequence $x$ is given by formula (\ref{3-1}), 
then for any $j\in \mathbb{N}$, we have%
\begin{equation*}
x_{k_{0}+j}=\left\{ 
\begin{array}{c}
\frac{1}{3^{j-1}}x_{k_{0}+1}\text{ if }\lambda _{k_{0}+j}\geq \frac{1}{3} \\ 
\frac{2}{3^{j-1}}x_{k_{0}+1}\text{ if }\lambda _{k_{0}+j}<\frac{1}{3}%
\end{array}%
\right. .
\end{equation*}

\begin{proof}
Using Proposition \ref{Pr1} we will calculate $\frac{x_{r+1}}{x_{r}}$ for $%
r>k_{0}$. We have 
\begin{equation*}
\frac{x_{r+1}}{x_{r}}=\frac{\lambda _{r}\left( 1-\lambda _{r+1}\right) }{%
1-\lambda _{r}}.
\end{equation*}%
If $\lambda _{r},\lambda _{r+1}\geq \frac{1}{3}\ $or $\lambda _{r},\lambda
_{r+1}<\frac{1}{3}$, then $3\lambda _{r}\lambda _{r+1}=4\lambda _{r}-1$.
Hence $1-\lambda _{r+1}=\frac{1-\lambda _{r}}{3\lambda _{r}}$, and so $\frac{%
x_{r+1}}{x_{r}}=\frac{1}{3}$. If $\lambda _{r}\geq \frac{1}{3}$ and $\lambda
_{r+1}<\frac{1}{3}$, then $3\lambda _{r}\lambda _{r+1}=5\lambda _{r}-2$.
Therefore, $1-\lambda _{r+1}=\frac{2-2\lambda _{r}}{3\lambda _{r}}$, which
gives $\frac{x_{r+1}}{x_{r}}=\frac{2}{3}$. If $\lambda _{r}<\frac{1}{3}$ and 
$\lambda _{r+1}\geq \frac{1}{3}$, then $6\lambda _{r}\lambda _{r+1}=7\lambda
_{r}-1$. Thus, $1-\lambda _{r+1}=\frac{1-\lambda _{r}}{6\lambda _{r}}$, and
consequently $\frac{x_{r+1}}{x_{r}}=\frac{1}{6}$. An easy induction leads to
our assertion.
\end{proof}
\end{proposition}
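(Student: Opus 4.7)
The plan is to reduce everything to the ratio $x_{r+1}/x_{r}$ for $r > k_{0}$ and then compute it in the three regimes distinguished by Proposition \ref{Pr1}(\ref{Pr1-3}). Using formula (\ref{3-1}),
\begin{equation*}
\frac{x_{r+1}}{x_{r}} = \frac{\lambda_{1}\cdots\lambda_{r}(1-\lambda_{r+1})}{\lambda_{1}\cdots\lambda_{r-1}(1-\lambda_{r})} = \frac{\lambda_{r}(1-\lambda_{r+1})}{1-\lambda_{r}},
\end{equation*}
so the whole problem collapses to expressing $1-\lambda_{r+1}$ in terms of $\lambda_{r}$ using the algebraic relations of Proposition \ref{Pr1}(\ref{Pr1-3}).

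Next I would handle the three cases in turn, in each case solving the quadratic relation for $1-\lambda_{r+1}$. If $\lambda_{r},\lambda_{r+1}$ lie on the same side of $\tfrac{1}{3}$, then $3\lambda_{r}\lambda_{r+1}=4\lambda_{r}-1$ gives $1-\lambda_{r+1}=(1-\lambda_{r})/(3\lambda_{r})$, whence $x_{r+1}/x_{r}=\tfrac{1}{3}$. If $\lambda_{r}\geq\tfrac{1}{3}$ and $\lambda_{r+1}<\tfrac{1}{3}$, then $3\lambda_{r}\lambda_{r+1}=5\lambda_{r}-2$ gives $1-\lambda_{r+1}=(2-2\lambda_{r})/(3\lambda_{r})$ and $x_{r+1}/x_{r}=\tfrac{2}{3}$. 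Finally, if $\lambda_{r}<\tfrac{1}{3}$ and $\lambda_{r+1}\geq\tfrac{1}{3}$, then $6\lambda_{r}\lambda_{r+1}=7\lambda_{r}-1$ gives $1-\lambda_{r+1}=(1-\lambda_{r})/(6\lambda_{r})$ and $x_{r+1}/x_{r}=\tfrac{1}{6}$. These three ratios $\tfrac{1}{3},\tfrac{2}{3},\tfrac{1}{6}$ are the only arithmetic facts the proof really needs.

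Having these, I would conclude by induction on $j$. The base case $j=1$ is trivial since $\lambda_{k_{0}+1}>\tfrac{1}{3}$ falls in the ``$\geq\tfrac{1}{3}$'' branch and gives $x_{k_{0}+1}=x_{k_{0}+1}/3^{0}$. For the inductive step, assuming the formula for $j$, I compare $x_{k_{0}+j+1}$ to $x_{k_{0}+j}$ via the appropriate ratio among $\{\tfrac{1}{3},\tfrac{2}{3},\tfrac{1}{6}\}$: the four pairings (same side, or transition in either direction) each multiply the prefactor $1$ or $2$ from the inductive hypothesis in exactly the way needed to produce the claimed prefactor at step $j+1$ while raising the denominator by one power of $3$. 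For instance, a ``$<\tfrac{1}{3}$ to $\geq\tfrac{1}{3}$'' transition turns the inductive prefactor $2$ into $2\cdot\tfrac{1}{6}\cdot 3 = 1$, matching the rule for the new index; the other three cases are equally mechanical.

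There is no genuine obstacle here; the only thing one must be careful about is bookkeeping, i.e. making sure that in each of the four possible pairings of ``$\lambda_{k_{0}+j}$ versus $\tfrac{1}{3}$'' and ``$\lambda_{k_{0}+j+1}$ versus $\tfrac{1}{3}$'' the inductive prefactor ($1$ or $2$) times the ratio ($\tfrac{1}{3}$, $\tfrac{2}{3}$ or $\tfrac{1}{6}$) equals the correct new prefactor divided by $3$. So the whole argument is: apply (\ref{3-1}), invoke Proposition \ref{Pr1}(\ref{Pr1-3}) to compute $x_{r+1}/x_{r}$ in closed form, and induct.
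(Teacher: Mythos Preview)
Your proposal is correct and follows essentially the same approach as the paper: compute $x_{r+1}/x_r=\lambda_r(1-\lambda_{r+1})/(1-\lambda_r)$, use Proposition~\ref{Pr1}(\ref{Pr1-3}) to evaluate this ratio as $\tfrac{1}{3}$, $\tfrac{2}{3}$, or $\tfrac{1}{6}$ in the three regimes, and then induct. In fact you supply more detail on the inductive step than the paper, which simply writes ``An easy induction leads to our assertion.''
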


\begin{corollary}\label{col1}
Assume that a sequence $\lambda $ satisfies the assumptions of Theorem \ref%
{Th1} and $k_{0}=0$, i.e. $\lambda _{1}>\frac{1}{3}$ and the sequence $%
\left( \lambda _{k_{n}}\right) _{n\in \mathbb{N}}$ consists of all terms of
the sequence $\lambda $ less than $\frac{1}{3}$. If the sequence $x$ is
given by formula (\ref{3-1}), then $x_{j}=\left\{ 
\begin{array}{c}
\frac{2x_{1}}{3^{j-1}}\text{ if }j\in \left\{ k_{n}:n\in \mathbb{N}\right\}
\\ 
\frac{x_{1}}{3^{j-1}}\text{ if }j\notin \left\{ k_{n}:n\in \mathbb{N}\right\}%
\end{array}%
\text{.}\right. $
\end{corollary}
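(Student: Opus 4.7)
The plan is to derive this as a direct specialization of the preceding Proposition to the case $k_{0}=0$. Since the corollary explicitly assumes that the sequence $\lambda$ satisfies the assumptions of Theorem \ref{Th1} with $k_{0}=0$, all hypotheses of the preceding Proposition are in force (the condition $\lambda_{k_{0}+1}>\frac{1}{3}$ becomes simply $\lambda_{1}>\frac{1}{3}$, which is part of the corollary's hypothesis, and the sequence $(k_{n})$ defined in the corollary is exactly the one appearing in the Proposition, since for $k_{0}=0$ the set of indices $j>k_{0}$ with $\lambda_{j}<\frac{1}{3}$ is just the set of all $j\in\N$ with $\lambda_{j}<\frac{1}{3}$).

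Next, I would apply the conclusion of the Proposition with $k_{0}=0$. This substitution yields $x_{k_{0}+j}=x_{j}$ and $x_{k_{0}+1}=x_{1}$, so the two-case formula from the Proposition becomes
\[
x_{j}=\begin{cases}\dfrac{1}{3^{j-1}}x_{1} & \text{if } \lambda_{j}\geq\tfrac{1}{3},\\[4pt] \dfrac{2}{3^{j-1}}x_{1} & \text{if } \lambda_{j}<\tfrac{1}{3}.\end{cases}
\]
Finally, I would translate the case distinction from a condition on $\lambda_{j}$ to a condition on $j$. By the definition of $(k_{n})_{n\in\N}$ recalled above, the condition $\lambda_{j}<\tfrac{1}{3}$ is equivalent to $j\in\{k_{n}:n\in\N\}$, while $\lambda_{j}\geq\tfrac{1}{3}$ is equivalent to $j\notin\{k_{n}:n\in\N\}$. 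Substituting these equivalences gives the asserted formula.

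There is no substantive obstacle here; the corollary is essentially a relabelling of the preceding Proposition. The only thing to be careful about is the bookkeeping: confirming that the definition of $(k_{n})$ used in the corollary's statement matches the one used in the Proposition once $k_{0}=0$ is imposed, which is immediate from the definitions.
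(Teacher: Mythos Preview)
Your proposal is correct and matches the paper's approach exactly: the paper states the corollary without a separate proof, since it is precisely the specialization of the preceding Proposition to $k_{0}=0$, with the case distinction on $\lambda_{j}$ rewritten as a condition on membership in $\{k_{n}:n\in\mathbb{N}\}$.
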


We will now show that the implication from Corollary \ref{col1} can be
conversed. Namely, every increasing sequence $\left( k_{n}\right) _{n\in 
\mathbb{N}}$ of natural numbers, for which $k_{1}>1$ and the set $\mathbb{N}%
\setminus \left\{ k_{n}:n\in \mathbb{N}\right\} $ is infinite, generates a
fast convergent series $\sum_{j=1}^{\infty }x_{j}$\ and the corresponding
sequence $\lambda $ satisfying the assumptions of Theorem \ref{Th1} (for $%
k_{0}=0$).

\begin{theorem}
\label{th5}Let $\left( k_{n}\right) _{n\in \mathbb{N}}$ be an increasing
sequence of natural numbers such that $k_{1}>1$ and the set $\mathbb{N}%
\setminus \left\{ k_{n}:n\in \mathbb{N}\right\} $ is infinite. Put 
\begin{equation*}
x_{j}:=\left\{ 
\begin{array}{ccc}
\frac{2}{3^{j-1}} & \text{if} & j\in \left\{ k_{n}:n\in \mathbb{N}\right\}
\\ 
\frac{1}{3^{j-1}} & \text{if} & j\notin \left\{ k_{n}:n\in \mathbb{N}\right\}%
\end{array}%
\right. ,
\end{equation*}%
$S:=\sum_{j=1}^{\infty }x_{j}$, $r_{n}:=\sum_{j=n+1}^{\infty }x_{j}$, and $%
\lambda _{n}:=\frac{r_{n}}{r_{n-1}}$ for $n\in \mathbb{N}$. The following
conditions hold.

\begin{enumerate}
\item The sequence $\lambda $ is the only sequence satisfying the
assumptions of Theorem \ref{Th1} for $k_{0}=0$. It means that $\left( \lambda
_{j} \right)\in \left( 0,\frac{1}{2}\right)^{\N} $, $\lambda _{1}>\frac{1}{3}$, $\lambda
_{k_{n}}<\frac{1}{3}$, $\lambda _{j}\geq \frac{1}{3}$ if $j\notin \left\{
k_{n}:n\in \mathbb{N}\right\} $, and there exists a sequence $\left( \delta
_{n}\right) $ satisfying condition (\ref{07}).

\item The set $E\left( x\right) -E\left( x\right) $ is a Cantorval and $%
E\left( x\right) =S\cdot C\left( \lambda \right) $.
\end{enumerate}

\begin{proof}
We have $\frac{3}{2}<S<3$ and 
\begin{equation*}
\frac{1}{2\cdot 3^{n-1}}=\sum_{j=n+1}^{\infty }\frac{1}{3^{j-1}}%
<r_{n}<\sum_{j=n+1}^{\infty }\frac{2}{3^{j-1}}=\frac{1}{3^{n-1}}\leq x_{n}.
\end{equation*}%
Thus, the series $\sum_{j=1}^{\infty }x_{j}$ is fast convergent and from
Proposition \ref{Pr2} it follows that $E\left( x\right) =S\cdot C\left(
\lambda \right) $ (in particular, $\left( \lambda _{j}\right) \in \left( 0,\frac{1}{2}%
\right)^{\N} $). If $j\in \left\{ k_{n}:n\in \mathbb{N}\right\} $, then $x_{j}=%
\frac{2}{3^{j-1}}$, and consequently%
\begin{equation*}
\lambda _{j}=\frac{r_{j}}{r_{j-1}}=\frac{r_{j-1}-x_{j}}{r_{j-1}}=1-\frac{%
x_{j}}{r_{j-1}}<1-\frac{x_{j}}{\frac{1}{3^{j-2}}}=\frac{1}{3}.
\end{equation*}%
However, if $j\notin \left\{ k_{n}:n\in \mathbb{N}\right\} $, then $x_{j}=%
\frac{1}{3^{j-1}}$, which yields%
\begin{equation*}
\lambda _{j}=1-\frac{x_{j}}{r_{j-1}}>1-\frac{x_{j}}{\frac{1}{2\cdot 3^{j-2}}}%
=\frac{1}{3}.
\end{equation*}%
Since $k_{1}>1$, we have $\lambda _{1}>\frac{1}{3}$. We will prove that condition (%
\ref{Pr1-3}) from Proposition \ref{Pr1} holds. If $\lambda _{j},\lambda
_{j+1}\geq \frac{1}{3}$ or $\lambda _{j},\lambda _{j+1}<\frac{1}{3}$, then $%
x_{j+1}=\frac{1}{3}x_{j}$, and, in consequence, 
\begin{equation*}
3\lambda _{j}\lambda _{j+1}-4\lambda _{j}+1=\frac{3r_{j+1}}{r_{j-1}}-\frac{%
4r_{j}}{r_{j-1}}+1=\frac{3r_{j+1}-4r_{j}+r_{j-1}}{r_{j-1}}=\frac{%
x_{j}-3x_{j+1}}{r_{j-1}}=0.
\end{equation*}%
If $\lambda _{j}\geq \frac{1}{3}$ and $\lambda _{j+1}<\frac{1}{3}$, then $%
x_{j+1}=\frac{2}{3}x_{j}$, which gives 
\begin{equation*}
3\lambda _{j}\lambda _{j+1}-5\lambda _{j}+2=\frac{3r_{j+1}-5r_{j}+2r_{j-1}}{%
r_{j-1}}=\frac{2x_{j}-3x_{j+1}}{r_{j-1}}=0.
\end{equation*}%
If $\lambda _{j}<\frac{1}{3}$ and $\lambda _{j+1}\geq \frac{1}{3}$, then $%
x_{j+1}=\frac{1}{6}x_{j}$, and consequently 
\begin{equation*}
6\lambda _{j}\lambda _{j+1}-7\lambda _{j}+1=\frac{6r_{j+1}-7r_{j}+r_{j-1}}{%
r_{j-1}}=\frac{x_{j}-6x_{j+1}}{r_{j-1}}=0.
\end{equation*}%
By Proposition \ref{Pr1} there is a sequence $\left( \delta _{n}\right) $
satisfying condition (\ref{07}). The explicitness of $\left( \delta
_{n}\right) $ follows from Remark \ref{r1}. Theorem \ref{Th1} implies that $%
C\left( \lambda \right) -C\left( \lambda \right) $ is a Cantorval, which
finishes the proof.
\end{proof}
\end{theorem}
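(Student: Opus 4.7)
The plan is to reduce everything to Theorem \ref{Th1} via the bridge given by Propositions \ref{Pr1} and \ref{Pr2}. So I would first verify that $(x_j)$ is fast convergent, which gives me $E(x)=S\cdot C(\lambda)$ and $\lambda\in(0,\tfrac{1}{2})^{\mathbb{N}}$ for free via Proposition \ref{Pr2}; then I would locate each $\lambda_j$ with respect to $\tfrac{1}{3}$ using the explicit two-sided bounds on $r_n$; and finally I would verify the algebraic identities from Proposition \ref{Pr1}(\ref{Pr1-3}), which is the substance of the theorem, to invoke Theorem \ref{Th1}.

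For the first step, the key estimate is that
\begin{equation*}
\frac{1}{2\cdot 3^{n-1}}\;=\;\sum_{j>n}\frac{1}{3^{j-1}}\;<\;r_n\;<\;\sum_{j>n}\frac{2}{3^{j-1}}\;=\;\frac{1}{3^{n-1}}\;\leq\; x_n,
\end{equation*}
so the series is fast convergent, Proposition \ref{Pr2} applies, and in particular $(\lambda_j)\in(0,\tfrac{1}{2})^{\mathbb{N}}$ with $E(x)=S\cdot C(\lambda)$. Writing $\lambda_j=1-x_j/r_{j-1}$ and substituting the value of $x_j$ together with the two-sided bound $\tfrac{1}{2\cdot 3^{j-2}}<r_{j-1}<\tfrac{1}{3^{j-2}}$ separates the two cases: when $j=k_n$ we have $x_j=2/3^{j-1}$, forcing $x_j/r_{j-1}>2/3$ and hence $\lambda_j<\tfrac{1}{3}$; when $j\notin\{k_n\}$ we have $x_j=1/3^{j-1}$, forcing $x_j/r_{j-1}<2/3$ and hence $\lambda_j>\tfrac{1}{3}$. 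Since $k_1>1$, in particular $\lambda_1>\tfrac{1}{3}$, so condition ``$\lambda_{k_0+1}>\tfrac{1}{3}$'' of Theorem \ref{Th1} holds with $k_0=0$, and the sequence $(k_n)$ given in the hypothesis coincides with the sequence canonically associated to $\lambda$ in the statement of Theorem \ref{Th1}.

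Next I would verify condition (\ref{Pr1-3}) of Proposition \ref{Pr1}. The clean way is to clear denominators: writing $\lambda_j=r_j/r_{j-1}$ and $\lambda_j\lambda_{j+1}=r_{j+1}/r_{j-1}$, each of the three identities to check turns into a linear relation among $r_{j-1},r_j,r_{j+1}$, and using $r_{j-1}-r_j=x_j$ and $r_j-r_{j+1}=x_{j+1}$ they reduce respectively to $x_j-3x_{j+1}=0$, $2x_j-3x_{j+1}=0$, and $x_j-6x_{j+1}=0$. These are precisely the ratios $x_{j+1}/x_j$ one reads off from the definition of $(x_j)$ in the three respective cases (both terms of equal type; $\lambda_j\ge\tfrac{1}{3}$ switching to $\lambda_{j+1}<\tfrac{1}{3}$; $\lambda_j<\tfrac{1}{3}$ switching to $\lambda_{j+1}\ge\tfrac{1}{3}$). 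Hence Proposition \ref{Pr1} produces a sequence $(\delta_n)$ satisfying (\ref{07}), Theorem \ref{Th1} applies and yields that $C(\lambda)-C(\lambda)$ is a Cantorval; from $E(x)=S\cdot C(\lambda)$ it follows that $E(x)-E(x)=S\cdot(C(\lambda)-C(\lambda))$ is also a Cantorval. Uniqueness in part (1) is immediate, because the explicit form of $(\delta_n)$ in Remark \ref{r1} shows $(\delta_n)$ is determined by $\lambda$, and $\lambda$ itself is determined by the data.

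The only mildly delicate point I anticipate is bookkeeping in the case split: one has to be careful that the ``boundary'' transitions between indices in $\{k_n\}$ and indices outside are correctly identified with the three cases of Proposition \ref{Pr1}, and that the ratio $x_{j+1}/x_j$ is computed accordingly. All the rest is a straightforward substitution, and no machinery beyond Propositions \ref{Pr1}, \ref{Pr2}, Remark \ref{r1}, and Theorem \ref{Th1} is needed.
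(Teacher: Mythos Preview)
Your proposal is correct and follows essentially the same route as the paper: the same two-sided estimate on $r_n$ to establish fast convergence and to locate each $\lambda_j$ relative to $\tfrac{1}{3}$, the same reduction of Proposition~\ref{Pr1}(\ref{Pr1-3}) to the three ratios $x_{j+1}/x_j$ via $\lambda_j=r_j/r_{j-1}$, and then Proposition~\ref{Pr1}, Remark~\ref{r1}, and Theorem~\ref{Th1} to finish. The paper handles the uniqueness in part~(1) with the same brevity you do, citing Remark~\ref{r1}.
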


If $x_{j}=\frac{1}{3^{j-1}}$ for all $j\in \mathbb{N}$, then $E\left(
x\right) $ is the Cantor ternary set (in the interval $\left[ 0,\frac{3}{2}%
\right] $). It is easy to check that, if $x_{j}=\frac{1}{3^{j-1}}$ for almost
all $j$'s and $x_{j}=\frac{2}{3^{j-1}}$ for the remaining ones, then $%
E\left( x\right) $ is a finite union of the Cantor ternary sets.
Consequently, $E\left( x\right) -E\left( x\right) $ is a finite union of
closed intervals. We get an analogous result if $x_{j}=\frac{2}{3^{j-1}}$
for almost all $j$'s and $x_{j}=\frac{1}{3^{j-1}}$ for the remaining ones.

Using Theorem \ref{Th1}, we can calculate the measure of Cantorvals $E\left(
x\right) -E\left( x\right) $ considered in Theorem \ref{th5}.

\begin{theorem}
\label{tw6}Let $\left( k_{n}\right) _{n\in \mathbb{N}}$ be an increasing
sequence of natural numbers such that $k_{1}>1$ and the set $\mathbb{N}%
\setminus \left\{ k_{n}:n\in \mathbb{N}\right\} $ is infinite. Put $%
x_{j}:=\left\{ 
\begin{array}{ccc}
\frac{2}{3^{j-1}} & \text{if} & j\in \left\{ k_{n}:n\in \mathbb{N}\right\}
\\ 
\frac{1}{3^{j-1}} & \text{if} & j\notin \left\{ k_{n}:n\in \mathbb{N}\right\}%
\end{array}%
\right. $. Then $\left\vert E\left( x\right) +E\left( x\right) \right\vert
=\left\vert E\left( x\right) -E\left( x\right) \right\vert =3$.

\begin{proof}
Let $S:=\sum_{n=1}^{\infty }x_{n}$ and $y_{n}:=\sum_{k_{n}<j<k_{n+1}}x_{j}$ (%
$y_{n}:=0$ if $k_{n+1}=k_{n}+1$). Then 
\begin{equation*}
y_{n}=\sum_{j=k_{n}+1}^{k_{n+1}-1}\frac{1}{3^{j}}=\frac{1}{2}\left( \frac{1}{%
3^{k_{n}}}-\frac{1}{3^{k_{n+1}-1}}\right)
\end{equation*}%
and for $n\in \mathbb{N}$ we have%
\begin{multline*}
2\left( y_{n}+3y_{n+1}+3^{2}y_{n+2}+3^{3}y_{n+3}+\ldots \right) \\
=\left( \frac{1}{3^{k_{n}}}-\frac{1}{3^{k_{n+1}-1}}\right) +\left( \frac{3}{%
3^{k_{n+1}}}-\frac{3}{3^{k_{n+2}-1}}\right) +\left( \frac{3^{2}}{3^{k_{n+2}}}%
-\frac{3^{2}}{3^{k_{n+3}-1}}\right) +\ldots =\frac{1}{3^{k_{n}}}.
\end{multline*}%
Hence 
\begin{multline*}
S\cdot \sum_{n=1}^{\infty }3^{n-1}\left( d_{k_{n}-1}-3d_{k_{n}}\right)
=S\cdot \sum_{n=1}^{\infty }3^{n-1}\left( \frac{r_{k_{n}-1}}{S}-3\frac{%
r_{k_{n}}}{S}\right) =\sum_{n=1}^{\infty }3^{n-1}\left(
r_{k_{n}-1}-3r_{k_{n}}\right) \\
=\sum_{n=1}^{\infty }3^{n-1}\left( x_{k_{n}}-2r_{k_{n}}\right)
=\sum_{n=1}^{\infty }3^{n-1}\left[ x_{k_{n}}-2\left(
x_{k_{n}+1}+x_{k_{n}+2}+\ldots \right) \right] \\
=\sum_{n=1}^{\infty }3^{n-1}\left[ x_{k_{n}}-2\left(
y_{n}+x_{k_{n+1}}+y_{n+1}+x_{k_{n+2}}+y_{n+2}+\ldots \right) \right] \\
=\sum_{n=1}^{\infty }3^{n-1}\left[ x_{k_{n}}-2\left(
x_{k_{n+1}}+x_{k_{n+2}}+\ldots \right) \right] -2\sum_{n=1}^{\infty
}3^{n-1}\left( y_{n}+y_{n+1}+y_{n+2}+\ldots \right) \\
=\left[ x_{k_{1}}+\left( -2+3\right) x_{k_{2}}+\left( -2-6+9\right)
x_{k_{3}}+\ldots +\right] \\
-2\left[ \left( y_{1}+3y_{2}+9y_{3}+\ldots \right) +\left(
y_{2}+3y_{3}+9y_{4}+\ldots \right) +\left( y_{3}+3y_{4}+9y_{5}+\ldots
\right) +\ldots \right] \\
=\sum_{n=1}^{\infty }x_{k_{n}}-\sum_{n=1}^{\infty }\frac{1}{3^{k_{n}}}=\frac{%
1}{2}\sum_{n=1}^{\infty }x_{k_{n}}.
\end{multline*}%
Consequently,%
\begin{eqnarray*}
\left\vert E\left( x\right) -E\left( x\right) \right\vert &=&S\cdot
\left\vert C\left( \lambda \right) -C\left( \lambda \right) \right\vert
=2S-2S\cdot \sum_{n=1}^{\infty }3^{n-1}\left( d_{k_{n}-1}-3d_{k_{n}}\right)
\\
&=&2S-\sum_{n=1}^{\infty }x_{k_{n}}=2\left( \sum_{n=1}^{\infty }\frac{1}{%
3^{n-1}}+\sum_{n=1}^{\infty }\frac{1}{3^{k_{n}}}\right) -\sum_{n=1}^{\infty }%
\frac{2}{3^{k_{n}}}=2\sum_{n=1}^{\infty }\frac{1}{3^{n-1}}=3.
\end{eqnarray*}
\end{proof}
\end{theorem}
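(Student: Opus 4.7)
The plan is to reduce the computation of $|E(x)-E(x)|$ to the formula for $|C(\lambda)-C(\lambda)|$ from Theorem \ref{Th1}(\ref{Th1-3}), and then to evaluate the resulting series by a telescoping rearrangement. First, Theorem \ref{th5} tells us that $E(x)=S\cdot C(\lambda)$, where $\lambda$ satisfies the hypotheses of Theorem \ref{Th1} with $k_0=0$. Therefore $E(x)-E(x)=S\cdot (C(\lambda)-C(\lambda))$, and Theorem \ref{Th1}(\ref{Th1-3}) yields
\begin{equation*}
|E(x)-E(x)|=2S-2S\sum_{n=1}^{\infty}3^{n-1}\left(d_{k_n-1}-3d_{k_n}\right).
\end{equation*}
Moreover, $E(x)+E(x)=(E(x)-E(x))+S$ is a translate of $E(x)-E(x)$, so the two measures coincide, and it suffices to prove that $|E(x)-E(x)|=3$.

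Next I would rewrite everything in terms of the tail sums $r_n$ of the series. Since $S\cdot d_n=S\cdot\lambda_1\cdots\lambda_n=r_n$, we get $S(d_{k_n-1}-3d_{k_n})=r_{k_n-1}-3r_{k_n}=x_{k_n}-2r_{k_n}$. Thus the goal becomes the identity
\begin{equation*}
2S-2\sum_{n=1}^{\infty}3^{n-1}\left(x_{k_n}-2r_{k_n}\right)=3.
\end{equation*}
To handle the inner tail $r_{k_n}=\sum_{j>k_n}x_j$, I would split its terms into the "$k$-indexed" part $\sum_{m>n}x_{k_m}$ and the "in-between" part $\sum_{m\geq n}y_m$, where $y_m:=\sum_{k_m<j<k_{m+1}}x_j$. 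Using the explicit form $y_m=\tfrac12\bigl(3^{-k_m}-3^{-(k_{m+1}-1)}\bigr)$ one gets the telescoping identity
\begin{equation*}
2\sum_{m\geq n}3^{m-n}y_m=3^{-k_n},
\end{equation*}
which is the key computational step.

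With this identity at hand, I would swap the order of summation in $\sum_{n\geq 1}3^{n-1}(x_{k_n}-2r_{k_n})$: the diagonal terms $x_{k_n}$ essentially cancel against the $k$-indexed contributions from $-2r_{k_n}$, leaving only a clean sum of the $y_m$'s that collapses via the above telescoping identity to $\sum_{n\geq 1}3^{-k_n}$. Combined with $S=\sum_{n\geq 1}3^{-(n-1)}+\sum_{n\geq 1}3^{-k_n}=\tfrac32+\sum_{n\geq 1}3^{-k_n}$ and $\sum_{n\geq 1}x_{k_n}=2\sum_{n\geq 1}3^{-(k_n-1)}$, everything should line up so that $2S$ minus twice the sum equals $2\cdot\tfrac32=3$. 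The main obstacle is precisely the bookkeeping in this rearrangement: one has to take care that each $y_m$ is counted with the correct weight $3^{n-1}$ for all $n\leq m$, which produces the geometric factor that converts $y_m$'s back into single terms $3^{-k_n}$. The rest is routine algebra with geometric series.
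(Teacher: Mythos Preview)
Your plan is correct and follows essentially the same route as the paper's proof: reduce via Theorem~\ref{th5} and Theorem~\ref{Th1}(\ref{Th1-3}) to the series $\sum_{n}3^{n-1}(x_{k_n}-2r_{k_n})$, split each $r_{k_n}$ into the $x_{k_m}$-part and the ``in-between'' sums $y_m$, use the telescoping identity $2\sum_{m\ge n}3^{m-n}y_m=3^{-k_n}$, and rearrange to obtain $\tfrac12\sum_n x_{k_n}$, after which $2S-\sum_n x_{k_n}=3$ follows immediately. The only detail to keep straight is the bookkeeping in the double-sum rearrangement, exactly as you note.
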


\begin{corollary}
If a sequence $\lambda $ satisfies the assumptions of Theorem \ref{Th1} for $%
k_{0}=0$, then $\left\vert C\left( \lambda \right) -C\left( \lambda \right)
\right\vert =\frac{3}{S}$, where $S=\sum_{n=1}^{\infty }x_{n}$ and $x$ is
the sequence given by formula (\ref{3-1}).
\end{corollary}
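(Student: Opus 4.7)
The plan is a one-step reduction to Theorem~\ref{tw6}. By Corollary~\ref{col1}, the sequence $x$ produced from $\lambda$ via formula~(\ref{3-1}) has the rigid form $x_j = x_1/3^{j-1}$ for $j \notin \{k_n\}$ and $x_j = 2x_1/3^{j-1}$ for $j \in \{k_n\}$; dividing every term by $x_1$ yields exactly the sequence $\tilde x$ appearing in Theorem~\ref{tw6}, i.e.\ $\tilde x_j = 1/3^{j-1}$ off the indices $k_n$ and $\tilde x_j = 2/3^{j-1}$ on them.

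First I would record the scale-invariance of the assignment $x \mapsto \lambda$ from Proposition~\ref{Pr2}(2): multiplying a series by a positive constant multiplies every remainder $r_n$ by the same constant, so the ratios $\lambda_n = r_n/r_{n-1}$ attached to $\tilde x$ coincide with those attached to $x$. In particular, $\tilde x$ is fast convergent (the bounds $1/(2 \cdot 3^{n-1}) < r_n < 1/3^{n-1} \le x_n$ used inside the proof of Theorem~\ref{th5} apply verbatim), and the $\lambda$ that Proposition~\ref{Pr2}(2) associates to $\tilde x$ is our original $\lambda$. Hence Proposition~\ref{Pr2}(2) gives $E(\tilde x) = \tilde S \cdot C(\lambda)$, where $\tilde S := \sum_n \tilde x_n$.

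Next, Theorem~\ref{tw6} applied directly to $\tilde x$ yields $|E(\tilde x) - E(\tilde x)| = 3$. Combining this with the scaling identity $E(\tilde x) - E(\tilde x) = \tilde S \cdot (C(\lambda) - C(\lambda))$ (and homogeneity of Lebesgue measure) immediately rearranges to $|C(\lambda) - C(\lambda)| = 3/\tilde S$, which is the asserted formula; here $S$ is to be read as the sum of the rescaled representing series $\tilde x$. Equivalently, since $\tilde S = (\sum_n x_n)/x_1 = 1/(1-\lambda_1)$, the conclusion can be rewritten as $|C(\lambda) - C(\lambda)| = 3(1-\lambda_1) = 3 x_1$.

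I do not foresee any serious obstacle: the substantive measure computation has already been carried out inside Theorem~\ref{tw6}, and the corollary is no more than a change-of-scale repackaging. The only point worth checking carefully is that the rescaled series $\tilde x$ really does reproduce the same $\lambda$ and is fast convergent, so that Proposition~\ref{Pr2}(2) is legitimately applicable; both facts are one-line verifications from the definitions of $r_n$ and $\lambda_n$.
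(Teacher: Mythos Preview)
Your approach is exactly the paper's: the Corollary carries no separate proof there and is meant to be read off from the last displayed line of the proof of Theorem~\ref{tw6}, namely $|E(x)-E(x)|=S\cdot|C(\lambda)-C(\lambda)|=3$, which rearranges to $|C(\lambda)-C(\lambda)|=3/S$. Your reduction via Corollary~\ref{col1} and Proposition~\ref{Pr2}(2) is just an explicit spelling-out of that step.

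You have, moreover, put your finger on a genuine slip in the statement. If $x$ is literally the sequence of formula~(\ref{3-1}), then Proposition~\ref{Pr2}(1) forces $S=1$, and the Corollary would assert $|C(\lambda)-C(\lambda)|=3$, impossible for a subset of $[-1,1]$. The intended $x$ and $S$ are those of Theorems~\ref{th5}--\ref{tw6} (your $\tilde x$ and $\tilde S$), and your computation $|C(\lambda)-C(\lambda)|=3/\tilde S=3x_1=3(1-\lambda_1)$ is the correct content.
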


Let $n\in \mathbb{N}$, $x_{1},x_{2},\dots ,x_{n}\in \mathbb{R}$ and $q\in
(0,1)$. The sequence 
\begin{equation*}
\left( x_{1},x_{2},\dots ,x_{n},x_{1}q,x_{2}q,\dots
,x_{n}q,x_{1}q^{2},x_{2}q^{2},\ldots ,x_{n}q^{2},\dots \right)
\end{equation*}%
is called \emph{a multigeometric sequence with the ratio }$q$ and is denoted
by $\left( x_{1},x_{2},\dots ,x_{n};q\right) $.

If a sequence $\left( k_{n}\right) $ can be decomposed into arithmetical
sequences with the same difference, then the sequence $x$ generated by $%
\left( k_{n}\right) $ is multigeometric with the ratio of the form $\frac{1}{%
3^{m}}$.

\begin{proposition}
\label{pr5}Assume that $p,m\in \mathbb{N}$, $1<s_{1}<\ldots <s_{p}\leq m$
and $k_{\left( n-1\right) p+j}=\left( n-1\right) m+s_{j}$ for $n\in \mathbb{N%
}$, $j\in \left\{ 1,\ldots ,p\right\} $, i.e. 
\begin{equation*}
k=\left( s_{1},\ldots ,s_{p},m+s_{1},\ldots ,m+s_{p},2m+s_{1},\ldots \right)
.
\end{equation*}%
Moreover, assume that $x$ and $\lambda $ are the sequences defined as in
Theorem \ref{th5}, that is 
\begin{equation*}
x_{j}=\left\{ 
\begin{array}{ccc}
\frac{2}{3^{j-1}} & \text{if} & j\in \left\{ k_{n}:n\in \mathbb{N}\right\}
\\ 
\frac{1}{3^{j-1}} & \text{if} & j\notin \left\{ k_{n}:n\in \mathbb{N}\right\}%
\end{array}%
\right. \quad \text{and}\quad \lambda _{j}=\frac{r_{j}}{r_{j-1}}.
\end{equation*}%
Then the sequence $x$ is multigeometric, 
\begin{equation*}
x=\left( \varepsilon _{1},\frac{\varepsilon _{2}}{3^{1}},\ldots ,\frac{%
\varepsilon _{m}}{3^{m-1}};\frac{1}{3^{m}}\right) \text{, where }\varepsilon
_{j}=\left\{ 
\begin{array}{ccc}
2 & \text{if} & j\in \left\{ s_{1},\ldots ,s_{p}\right\} \\ 
1 & \text{if} & j\notin \left\{ s_{1},\ldots ,s_{p}\right\}%
\end{array}%
\right. 
\end{equation*}%
and $\left\vert C\left( \lambda \right) -C\left( \lambda \right) \right\vert
=\frac{3^{m}-1}{3^{m-1}\left( x_{1}+\ldots +x_{m}\right) }$.

\begin{proof}
Since $\left\{ k_{n}:n\in \mathbb{N}\right\} =\left\{ s_{1},\ldots
,s_{p},m+s_{1},\ldots ,m+s_{p},\ldots \right\} $, the equality $x_{j}=\frac{%
\varepsilon _{j}}{3^{j-1}}$ is obvious for $j\in \left\{ 1,\ldots ,m\right\} 
$. Observe that for any $j\in \mathbb{N}$ we have $j\in \left\{ k_{n}:n\in 
\mathbb{N}\right\} $ if and only if $j+m\in \left\{ k_{n}:n\in \mathbb{N}%
\right\} $. Consequently, $\frac{x_{j+m}}{x_{j}}=\frac{3^{1-j-m}}{3^{1-j}}=%
\frac{1}{3^{m}}$ for $j\in \mathbb{N}$, which finishes the proof that $%
x=\left( \varepsilon _{1},\frac{\varepsilon _{2}}{3^{1}},\ldots ,\frac{%
\varepsilon _{m}}{3^{m-1}};\frac{1}{3^{m}}\right) $. Hence 
\begin{equation*}
S=\sum_{k=1}^{\infty }\frac{x_{1}+\ldots +x_{m}}{\left( 3^{m}\right) ^{k}}%
=3^{m}\cdot \frac{x_{1}+\ldots +x_{m}}{3^{m}-1}.
\end{equation*}%
Using Theorem \ref{tw6} we obtain $\left\vert C\left( \lambda \right)
-C\left( \lambda \right) \right\vert =\frac{3}{S}=\frac{3^{m}-1}{%
3^{m-1}\left( x_{1}+\ldots +x_{m}\right) }$.
\end{proof}
\end{proposition}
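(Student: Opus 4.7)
The plan is to deduce the multigeometric form of $x$ from the arithmetic-progression structure of $(k_n)$, and then to obtain the measure formula by combining Theorem \ref{tw6} with the scaling identity $E(x)=S\cdot C(\lambda)$ from Proposition \ref{Pr2}(2). The proof is essentially a direct computation; there is no genuine obstacle, and the only idea needed is to notice that the set $\{k_n:n\in\mathbb{N}\}$, being the union of the $p$ arithmetic progressions $\{s_j+(\nu-1)m:\nu\in\mathbb{N}\}$ for $j=1,\dots,p$, is invariant under the shift $i\mapsto i+m$.

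For the multigeometric form, first observe that, since $s_p\leq m<m+s_1$, an integer $i\in\mathbb{N}$ belongs to $\{k_n\}$ iff its residue in $\{1,\dots,m\}$ modulo $m$ lies in $\{s_1,\dots,s_p\}$. In particular $j\in\{k_n\}\Leftrightarrow j+m\in\{k_n\}$ for every $j$, so $\varepsilon_j$ is $m$-periodic and hence $x_{j+m}/x_j=1/3^m$ directly from the defining formula for $x$. For $j\in\{1,\dots,m\}$ the residue is $j$ itself, so $x_j=\varepsilon_j/3^{j-1}$ with $\varepsilon_j$ as in the statement; combining these two facts gives $x=(\varepsilon_1,\varepsilon_2/3,\dots,\varepsilon_m/3^{m-1};1/3^m)$.

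To obtain the measure, I would first verify that the hypotheses of Theorem \ref{th5} hold: the sequence $(k_n)$ is strictly increasing, $k_1=s_1>1$, and $\mathbb{N}\setminus\{k_n\}$ is infinite because $1<s_1\leq s_p\leq m$ forces $p\leq m-1$, leaving at least one residue class mod $m$ outside $\{s_1,\dots,s_p\}$. Summing the multigeometric series block by block yields
\[
S=\sum_{\nu=0}^{\infty}\frac{x_1+\dots+x_m}{3^{\nu m}}=\frac{3^m(x_1+\dots+x_m)}{3^m-1}.
\]
Theorem \ref{tw6} gives $|E(x)-E(x)|=3$, while Proposition \ref{Pr2}(2) together with $E(x)-E(x)=S\cdot\bigl(C(\lambda)-C(\lambda)\bigr)$ shows $|C(\lambda)-C(\lambda)|=3/S$. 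Substituting the expression for $S$ produces $(3^m-1)/\bigl(3^{m-1}(x_1+\dots+x_m)\bigr)$, as claimed.
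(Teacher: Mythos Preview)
Your proof is correct and follows essentially the same approach as the paper: you establish the $m$-periodicity of the set $\{k_n\}$ to obtain the multigeometric form of $x$, sum the resulting geometric series to compute $S$, and then combine Theorem~\ref{tw6} with the scaling $E(x)=S\cdot C(\lambda)$ to get $|C(\lambda)-C(\lambda)|=3/S$. If anything, you are slightly more explicit than the paper in checking the hypotheses of Theorem~\ref{th5} and in invoking Proposition~\ref{Pr2}(2) for the scaling step.
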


\begin{example}
\begin{enumerate}
\item Assume that $k_{n}=2n$ and the sequences $x$ and $\lambda $ are
defined as in Theorem \ref{th5}. Writing $m=2$ and $p=1$, from Proposition %
\ref{pr5}, we obtain $x=\frac{1}{3}\cdot \left( 3,2;\frac{1}{9}\right) $.
This example was considered in \cite{BGM}. Observe that $r_{2n}=%
\sum_{j=2n+1}^{\infty }x_{j}=\frac{15}{8\cdot 9^{n}}$ and $%
r_{2n+1}=\sum_{j=2n+2}^{\infty }x_{j}=\frac{7}{8\cdot 9^{n}}$. Hence $%
\lambda _{2n-1}=\frac{r_{2n-1}}{r_{2n-2}}=\frac{7}{15}$, $\lambda _{2n}=%
\frac{r_{2n}}{r_{2n-1}}=\frac{5}{21}$, and $\left\vert C\left( \lambda
\right) -C\left( \lambda \right) \right\vert =\frac{3^{2}-1}{3\left(
x_{1}+x_{2}\right) }=\frac{8}{3+2}=\frac{8}{5}$.

\item If $k_{n}=3n$, then for $m=3$ and $p=1$ we get $x=\frac{1}{9}\cdot
\left( 9,3,2;\frac{1}{27}\right) $, $r_{3n-2}=\frac{8}{13\cdot 27^{n-1}}$, $%
r_{3n-1}=\frac{99}{13\cdot 27^{n}}$, and $r_{3n}=\frac{21}{13\cdot 27^{n}}$.
Hence $\lambda _{3n-2}=\frac{r_{3n-2}}{r_{3n-3}}=\frac{8}{21}$, $\lambda
_{3n-1}=\frac{r_{3n-1}}{r_{3n-2}}=\frac{11}{24}$, $\lambda _{3n}=\frac{r_{3n}%
}{r_{3n-1}}=\frac{7}{33}$, and $\left\vert C\left( \lambda \right) -C\left(
\lambda \right) \right\vert =\frac{3^{3}-1}{9+3+2}=\frac{13}{7}$.

\item If $k=\left( 2,3,5,6,8,9,\ldots \right) $, that is $k_{2n-1}=3n-1$ and 
$k_{2n}=3n$, then for $m=3$ and $p=2$ we get $x=\frac{1}{9}\cdot \left(
9,6,2;\frac{1}{27}\right) $. Hence $\lambda _{3n-2}=\frac{25}{51}$, $\lambda
_{3n-1}=\frac{23}{75}$, $\lambda _{3n}=\frac{17}{69}$, and $\left\vert
C\left( \lambda \right) -C\left( \lambda \right) \right\vert =\frac{3^{3}-1}{%
9+6+2}=\frac{26}{17}$.
\end{enumerate}
\end{example}

\end{document}